\newcommand{\norm}[1]{\left\Vert #1 \right\Vert} 
\renewcommand{\Re}{{\mathbbm R}}
\newcommand{\one}{1\hspace{-0,9ex}1} 
\newcommand{\mathsc}[1]{{\normalfont\textsc{#1}}}
\newcommand{\A}{\mathbf{A}}
\renewcommand{\b}{\mathbf{b}}
\newcommand{\f}{\mathfrak{f}}
\newcommand{\s}{\mathfrak{s}}
\newtheorem{remark}{Remark}
\newtheorem{example}{Example}
\def\tableofcontents{\section*{Contents.\@mkboth{CONTENTS}{CONTENTS}\hskip 1em}  
 \@starttoc{toc}}
\def\norm#1{\|#1\|}
\title{Multirate generalized additive Runge Kutta methods\thanks{
The work of A. Sandu has been supported in part by NSF through awards NSF
OCI--8670904397, NSF CCF--0916493, NSF DMS--0915047, NSF CMMI--1130667, 
NSF CCF--1218454, AFOSR FA9550--12--1--0293--DEF, AFOSR 12-2640-06,
and by the Computational Science Laboratory at Virginia Tech.}}
\author{Michael G\"unther\thanks{Bergische Universit\"at Wuppertal,
        Institute of Mathematical Modelling, Analysis and Compuational
        Mathematics (IMACM), Gau\ss strasse 20, D-42119 Wuppertal, Germany 
        ({\tt guenther@uni-wuppertal.de}).}
\and Adrian Sandu\thanks{Virginia Polytechnic Institute and State University, Computational Science Laboratory, Department of Computer Science, 2202 Kraft Drive, Blacksburg, VA 24060, USA ({\tt sandu@cs.vt.edu}).}
}
\begin{document}

\thispagestyle{empty}
\setcounter{page}{0}

\begin{Huge}
\begin{center}
Computational Science Laboratory Technical Report CSL-TR-{\tt 6/2013} \\
\today
\end{center}
\end{Huge}
\vfil
\begin{huge}
\begin{center}
Michael G\"{u}nther and Adrian Sandu
\end{center}
\end{huge}

\vfil
\begin{huge}
\begin{it}
\begin{center}
``{\tt Multirate generalized additive Runge Kutta methods}''
\end{center}
\end{it}
\end{huge}
\vfil

\begin{large}
\begin{center}
Computational Science Laboratory \\
Computer Science Department \\
Virginia Polytechnic Institute and State University \\
Blacksburg, VA 24060 \\
Phone: (540)-231-2193 \\
Fax: (540)-231-6075 \\ 
Email: \url{sandu@cs.vt.edu} \\
Web: \url{http://csl.cs.vt.edu}
\end{center}
\end{large}

\vspace*{1cm}

\begin{tabular}{ccc}
\includegraphics[width=2.5in]{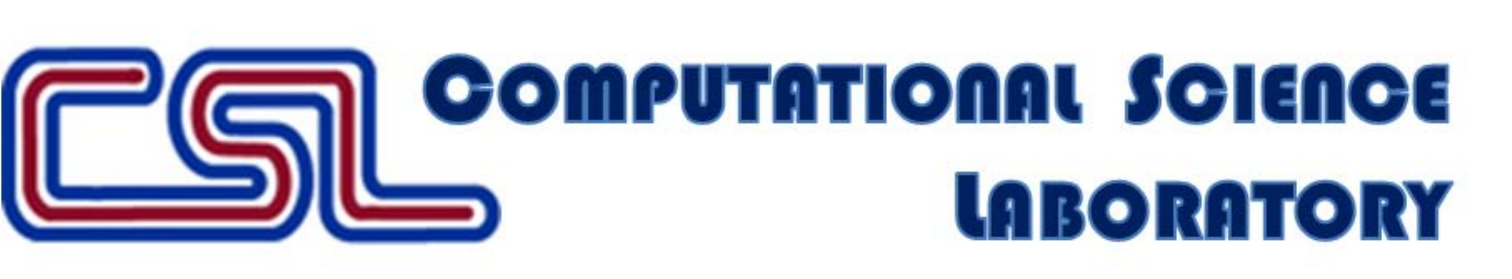}
&\hspace{2.5in}&
\includegraphics[width=2.5in]{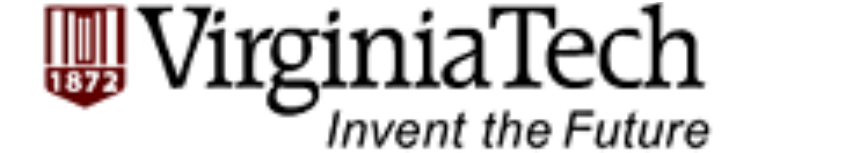} \\
{\bf\em Innovative Computational Solutions} &&\\
\end{tabular}

\newpage

\maketitle

\begin{abstract}
This work constructs a new class of multirate schemes based on the recently developed
generalized additive Runge-Kutta (GARK) methods~\cite{SaGu13a}. 
Multirate schemes use different step sizes 
for different components and for different partitions of the right-hand side based on the local activity levels. 
We show that the new multirate GARK family includes many well-known multirate schemes as special cases.
The order conditions theory follows directly from the GARK accuracy theory. Nonlinear stability and monotonicity investigations show that these properties are inherited from the base 
schemes provided that additional coupling conditions hold.
\end{abstract}

\begin{keywords} 
Generalized additive Runge-Kutta schemes, partitioned Runge-Kutta methods, multirate integration, nonlinear stability, monotonicity
\end{keywords}

\begin{AMS}
65L05, 65L06, 65L07, 65L020.
\end{AMS}

\pagestyle{myheadings}
\thispagestyle{plain}
\markboth{MICHAEL G\"UNTHER AND ADRIAN SANDU}{MULTIRATE GARK METHODS}

\section{Introduction}\label{sec:introduction}

Generalized additive Runge-Kutta (GARK) methods were introduced in ~\cite{SaGu13a}  to solve initial value problems for {\it additively} partitioned systems of ordinary differential equations 
\begin{equation} 
\label{eqn:additive-ode}
 y'= f(y) = \sum_{m=1}^N f^{\{m\}} (y), \quad y(t_0)=y_0\,,
\end{equation}
where the right-hand side $f: \Re^d \rightarrow \Re^d$ is split into in $m$ different parts 
with respect to, for example, stiffness, nonlinearity, dynamical behavior, and evaluation cost. Additive partitioning includes the case of {\em component} partitioning as follows. The set of indices $\{1,2,\ldots,d\}$ that number the solution components $y^i$ is split into $N$ subsets ${\cal I}^{\{m\}}$ to define 
\begin{equation}
\label{compwisepart}
f^{\{m\}}:= \sum_{i \in {\cal I}^{\{m\}}} e_i^T e_i\, f(y)\,, \quad \textnormal{i.e.,}\quad
f^{\{m\}i} (y) = \left\{
\begin{array}{ll}
f^i(y), \quad & i \in {\cal I}^{\{m\}}, \\
0, \quad & i \notin {\cal I}^{\{m\}}
\end{array}
\right. \,.
\end{equation}

A GARK method advances the numerical solution as follows ~\cite{SaGu13a} 
\begin{subequations}
\label{eqn:GARK}
\begin{eqnarray}
\label{eqn:delta-stage2}
Y_i^{\{q\}} &=& y_{n} + h \sum_{m=1}^N \sum_{j=1}^{s^{\{m\}}} a_{i,j}^{\{q,m\}} \, f^{\{m\}}(Y_j^{\{m\}}), \quad q=1,\ldots,N \, ,  \\
\label{eqn:delta-sol2}
y_{n+1} &=& y_n + h \sum_{q=1}^N\, \sum_{i=1}^{s^{\{q\}}} b_{i}^{\{q\}} \, f^{\{q\}}(Y_i^{\{q\}}) \,,
\end{eqnarray}
\end{subequations}
and is characterized by the extended Butcher tableau
\renewcommand{\arraystretch}{1.25}
\begin{equation}
\label{eqn:GARK-butcher}
\begin{array}{cccc}
\A^{\{1,1\}} & \A^{\{1,2\}} & \ldots & \A^{\{1,N\}} \\
\A^{\{2,1\}} &\A^{\{2,2\}} & \ldots & \A^{\{2,N\}} \\
\vdots & \vdots & & \vdots \\
\A^{\{N,1\}} & \A^{\{N,2\}} & \ldots & \A^{\{N,N\}} \\ \hline
\b^{\{1\}} & \b^{\{2\}} & \ldots &\b^{\{N\}}
\end{array}.
\end{equation}
\renewcommand{\arraystretch}{1.0}
Generalized additive Runge-Kutta methods show excellent stability properties and flexibility to exploit the different behavior of the partitions.
In contrast to additive Runge-Kutta schemes introduced in~\cite{KeCa2003}, GARK schemes 
allow for different stage values in the different partitions of $f$. Note that 
additive Runge-Kutta schemes are a special case of GARK
with ${\mathbf{A}}^{\{m,\ell\}}:={\mathbf{A}}^{\{\ell\}}$ for all $m,\ell=1,\ldots,N$.

This study develops new multirate schemes in the generalized additive Runge-Kutta framework. 
The paper is organized as follows. Section~\ref{sec:mgark-schemes} introduces the multirate GARK family and discusses their computational effort. 
The algebraic stability results for GARK schemes are transferred to multirate GARK schemes and order conditions are derived. 
Section~\ref{sec:mgark-traditional} shows that many existing multirate Runge-Kutta schemes can be represented and analyzed in the GARK framework. 
The generalization of additive Runge-Kutta schemes to multirate versions is considered in Section~\ref{sec:mark}. Section~\ref{sec:monotonicity} discusses absolutely monotonic multirate GARK schemes and shows how to construct such schemes. Finally,
conclusions are drawn in Section~\ref{sec:conclusions}.

\section{Generalized additive multirate schemes}\label{sec:mgark-schemes} 

\subsection{Formulation of generalized additive multirate schemes} 

We consider a two-way partitioned system \eqref{eqn:additive-ode} with one slow component $\{\mathfrak{s}\}$, and one active (fast) component $\{\mathfrak{f}\}$. The slow component is solved with a large step $H$, and the fast one with small steps $h=H/M$. Denote by $\widetilde{y}$ the intermediate solutions computed by the fast micro-steps, stating with $\widetilde{y}_n:= y_n$. A multirate generalization of \eqref{eqn:GARK} with $M$ micro steps $h=H/M$ proceeds as follows.
\begin{subequations}
\label{eqn:GARK-MR}
The slow stage values are given by: 
%
\begin{eqnarray}
\label{eqn:GARK-MR-slow-stage}
Y_i^{\{\s\}} & = & y_n + H \, \sum_{j=1}^{s^{\{\s\}}} a_{i,j}^{\{\s,\s\}} f^{\{\s\}}\left(Y_j^{\{\s\}}\right) + \\
 \nonumber
 & & 
                    + h \, \sum_{\lambda=1}^M \,\sum_{j=1}^{s^{\{\f\}}} a_{i,j}^{\{\s,\f,\lambda\}} f^{\{\f\}}\left(Y_j^{\{\f,\lambda\}}\right)\,, 
                     %
\qquad i=1,\dots,     s^{\{\s\}}\,.
\end{eqnarray}
The fast micro-steps are:
\begin{eqnarray}
\nonumber
\textnormal{For } \lambda=1,\ldots,M &&\\
\label{eqn:GARK-MR-fast-stage}
Y_i^{\{\f,\lambda\}} & = & \widetilde{y}_{n+\left(\lambda-1\right)/M} + H \, \sum_{j=1}^{s^{\{\s\}}} a_{i,j}^{\{\f,\s,\lambda\}} f^{\{\s\}}\left(Y_j^{\{\s\}}\right) + \\ 
\nonumber
& & +  h \, \sum_{j=1}^{s^{\{\f\}}} a_{i,j}^{\{\f,\f\}} f^{\{\f\}}\left(Y_j^{\{\f,\lambda\}}\right),~~ i=1,\dots,     s^{\{\f\}},\\
\widetilde{y}_{n+\lambda/M} & = & \widetilde{y}_{n+\left(\lambda-1\right)/M} + h \sum_{i=1}^{s^{\{\f\}}} b_{i}^{\{\f\}} f^{\{\f\}}\left(Y_i^{\{\f,\lambda\}}\right)\,.
\end{eqnarray}
The full (macro-step) solution is given by:
\begin{eqnarray}
y_{n+1} & = & \widetilde{y}_{n+M/M} + H \, \sum_{i=1}^{s^{\{\s\}}} b_{i}^{\{\s\}} f^{\{\s\}}\left(Y_i^{\{\s\}}\right) \,.
\end{eqnarray}
\end{subequations}

After eliminating the micro-step solutions $\widetilde{y}$  from the multirate GARK method~\eqref{eqn:GARK-MR} we arrive at the following form.

\begin{definition}[Multirate GARK method] One macro-step of a generalized additive multirate Runge-Kutta method with $M$ equal micro-steps reads
\begin{subequations}
\label{MGARK}
\begin{eqnarray}
\label{MGARK-slow-stage}
Y_i^{\{\s\}} & = & y_n + H \, \sum_{j=1}^{s^{\{\s\}}} a_{i,j}^{\{\s,\s\}} f^{\{\s\}}\left(Y_j^{\{\s\}}\right) + 
                  h \, \sum_{\lambda=1}^M \sum_{j=1}^{s^{\{\f\}}} a_{i,j}^{\{\s,\f,\lambda\}} f^{\{\f\}}\left(Y_j^{\{\f,\lambda\}}\right), \\
\quad Y_i^{\{\f,\lambda\}} & = & y_n + h \sum_{l=1}^{\lambda-1} \sum_{j=1}^{s^{\{\f\}}} b_{j}^{\{\f\}} f^{\{\f\}}\left(Y_j^{\{\f,l\}}\right) + 
\, \sum_{j=1}^{s^{\{\s\}}} a_{i,j}^{\{\f,\s,\lambda\}} f^{\{\s\}}\left(Y_j^{\{\s\}}\right) + \\ \nonumber
& & +  h \, \sum_{j=1}^{s^{\{\f\}}} a_{i,j}^{\{\f,\f\}} f^{\{\f\}}\left(Y_j^{\{\f,\lambda\}}\right), \quad \lambda=1,\ldots,M,\\
y_{n+1} & = & 
y_n + h \, \sum_{\lambda=1}^M \sum_{i=1}^{s^{\{\f\}}} b_{i}^{\{\f\}} f^{\{\f\}}\left(Y_i^{\{\f,\lambda\}}\right) + 
 H \, \sum_{j=1}^{s^{\{\s\}}} b_{i}^{\{\s\}} f^{\{\s\}}\left(Y_i^{\{\s\}}\right).
\end{eqnarray}
\end{subequations}
The base  schemes are Runge-Kutta methods, $(A^{\{\f,\f\}},b^{\{\f\}})$ for the slow component and 
$(A^{\{\s,\s\}},b^{\{\s\}})$ for the fast component. The coefficients $A^{\{\s,\f,\lambda\}}$, $A^{\{\f,\s,\lambda\}}$
realize the coupling between the two components.
\end{definition}

The method \eqref{MGARK} can be
written as a GARK scheme~\eqref{eqn:GARK}
over the macro-step $H$ with the fast stage vectors $Y^{\{\f\}}:=[Y^{\{\f,1\}}\,^T,\ldots,Y^{\{\f,M\}}\,^T]^T$. 
The corresponding Butcher tableau \eqref{eqn:GARK-butcher} reads
%
%
\begin{equation}
\label{eqn:mrRK-butcher}
\renewcommand{\arraystretch}{1.5}
\begin{array}{c|c}
{\mathbf{A}}^{\{\f,\f\}} & {\mathbf{A}}^{\{\f,\s\}}  \\ \hline
{\mathbf{A}}^{\{\s,\f\}} &{\mathbf{A}}^{\{\s,\s\}} \\ \hline 
{\mathbf{b}}^{\{\f\}} \,^T & {\mathbf{b}}^{\{\s\}} \,^T
\end{array} ~~ :=~~ 
\begin{array}{cccc|cccc}  
\frac{1}{M} A^{\{\f,\f\}}      &          0                   & \cdots & 0 & A^{\{\f,\s,1\}}  \\
\frac{1}{M} \mathbf{1} b^{\{\f\}}\,^T & \frac{1}{M} A^{\{\f,\f\}}        & \cdots & 0 &  A^{\{\f,\s,2\}}  \\
\vdots                     &                             & \ddots &   & \vdots  \\
\frac{1}{M} \mathbf{1} b^{\{\f\}}\,^T & \frac{1}{M} \mathbf{1} b^{\{\f\}}\,^T   & \ldots & \frac{1}{M} A^{\{\f,\f\}} &A^{\{\f,\s,M\}} \\
\hline 
\frac{1}{M} A^{\{\s,\f,1\}} & \frac{1}{M} A^{\{\s,\f,2\}} & \cdots & \frac{1}{M} A^{\{\s,\f,M\}} & A^{\{\s,\s\}}   \\   \hline 
\frac{1}{M} b^{\{\f\}}\,^T & \frac{1}{M} b^{\{\f\}}\,^T & \ldots & \frac{1}{M} b^{\{\f\}}\,^T & b^{\{\s\}}\,^T  
\end{array}
\renewcommand{\arraystretch}{1.0}
\end{equation}

\begin{example}[Simple MR GARK]
A simple version of \eqref{MGARK} uses the same coupling in all micro-steps,
\[
A^{\{\f,\s,1\}} =  \dots = A^{\{\f,\s,N\}} = A^{\{\f,\s\}}\,.
\]\
%
%
As we will see later, for stability reasons it is necessary in general to introduce additional freedom by using different coupling matrices for the micro-steps.
\end{example}

\begin{example}[Telescopic MR GARK]
Of particular interest are methods \eqref{MGARK} which use the same base scheme for both the slow and the fast components,
\begin{equation}
\label{eqn:mr-same-scheme}
A^{\{\f,\f\}} = A^{\{\s,\s\}} = A\,, \quad b^{\{\f\}} = b^{\{\s\}} = b\,.
\end{equation}
Such methods can be easily extended to systems with more than two scales by applying them in a telescopic fashion.
\end{example}

\subsection{Computational considerations}

The general formulation of the method \eqref{eqn:GARK-MR} leads to a system of coupled equations for all the fast and the slow stages, and the
resulting computational effort is larger, not smaller, than solving the coupled system with a small step. 
For an efficient computational process the macro and micro-steps need to stay mostly decoupled.

A very effective approach is to have the slow stages \eqref{eqn:GARK-MR-slow-stage} for $i=1,\dots,     s^{\{\s\}}$ coupled only with the first
fast micro-step,
\begin{eqnarray}
\label{eqn:GARK-MR-slow-stage-one-coupled}
Y_i^{\{\s\}} & = & y_n + H \, \sum_{j=1}^{s^{\{\s\}}} a_{i,j}^{\{\s,\s\}} f^{\{\s\}}\left(Y_j^{\{\s\}}\right) +
                      h \, \sum_{j=1}^{s^{\{\f\}}} a_{i,j}^{\{\s,\f\}} f^{\{\f\}}\left(Y_j^{\{\f,1\}}\right)\,. 
\end{eqnarray}
Equation \eqref{eqn:GARK-MR-slow-stage-one-coupled} and \eqref{eqn:GARK-MR-fast-stage} with $\lambda=1$ 
are solved together.
When both methods are implicit this first computation has a similar cost as one step of the coupled system.
The following fast micro-steps \eqref{eqn:GARK-MR-fast-stage} with $\lambda\ge 2$ are solved independently. The corresponding slow-fast coupling matrix is
\begin{equation}
\label{eqn:GARK-MR-slow-stage-one-coupled-A}
{\mathbf{A}}^{\{\s,\f\}}  = 
\begin{bmatrix}
\frac{1}{M} 
A^{\{\s,\f\}} & 0 & \cdots & 0  
\end{bmatrix}.
\end{equation}

When the slow stages are computed in succession, e.g., when the slow method is explicit or diagonally implicit, 
a more complex approach to decouple the computations is possible. Namely, the slow stages are coupled only with the micro-steps that have been computed already,
and vice-versa, the micro-steps are coupled only with the macro-stages whose solutions are already available. The fast and the slow methods proceed side by side. 
This decoupling can be achieved by choosing
\[
A^{\{\f,\s,\lambda\}} = \begin{bmatrix} \bar{A}^{\{\f,\s,\lambda\}} & \mathbf{0} \end{bmatrix} \in \Re^{s^{\{\f\}} \times s^{\{\s\}}}\,,
\quad \bar{A}^{\{\f,\s,\lambda\}} \in \Re^{s^{\{\f\}} \times j(\lambda)}\,,
\]
where $j(\lambda)$ is the number of slow stages that have been computed before the current micro-step, e.g.,
$c^{\{\s\}}_{j(\lambda)} \le (\lambda-1)/M$. 
The micro-step $\lambda$ is only coupled to these (known) stages.
Similarly,
\[
A^{\{\s,\f,\lambda\}} = \begin{bmatrix} \mathbf{0} \\ \bar{A}^{\{\s,\f,\lambda\}}  \end{bmatrix} \in \Re^{s^{\{\s\}} \times s^{\{\f\}}}\,,
\quad \bar{A}^{\{\s,\f,\lambda\}} \in \Re^{i(\lambda) \times s^{\{\f\}} }\,,
\]
where the first $s^{\{\s\}}-i(\lambda)$ slow stages are computed before the micro-step $\lambda$, and therefore
they are {\em not} coupled to the current micro-step.
An example of such methods is discussed in detail Section \ref{sec:MIS}.

\subsection{Nonlinear stability}
We consider systems \eqref{eqn:additive-ode} where each of the component functions  is dispersive:
\begin{equation}
\label{dispersive-condition}
\left\langle  f^{\{m\}} (y)- f^{\{m\}} (z)\,,\, y-z \right\rangle \le \nu^{\{m\}}\, \left\Vert y-z  \right\Vert^2\,, \quad \nu^{\{m\}} < 0 \,, \quad
 m \in \{\f,\s\}
\end{equation}
with respect to the same scalar product $\langle  \cdot, \cdot \rangle$.
For two solutions $y(t)$ and $\widetilde{y}(t)$  of \eqref{eqn:additive-ode}, each starting from a different initial condition,
the norm of the solution difference $\Delta y(t) = \widetilde{y}(t)- y(t)$ is non-increasing, 
$\lim_{\varepsilon>0, \varepsilon \to 0} \norm{\Delta y(t+\varepsilon)} \le \norm{\Delta y(t)}$.

This section investigates the conditions under which the multirate scheme \eqref{MGARK} is nonlinearly stable, i.e. the inequality
\[
 \| y_{n+1} - \tilde y_{n+1} \| \le \| y_{n} - \tilde y_{n} \|
\]
holds for any two numerical approximations $y_{n+1}$ and $\widetilde{y}_{n+1}$  obtained by applying the scheme to the ODE \eqref{eqn:additive-ode} with \eqref{dispersive-condition} and with initial values $y_n$ and $\widetilde{y}_n$.

\subsubsection{Additive partitioning}
Following the GARK stability analysis in~\cite{SaGu13a}, a multirate GARK scheme is algebraically stable if the following matrix is non-negative definite
\renewcommand{\arraystretch}{1.25}
\[
\mathbf{P} = \begin{bmatrix} \mathbf{P}^{\{\f,\f\}} & \mathbf{P}^{\{\f,\s\}} \\ \mathbf{P}^{\{\s,\f\}} & \mathbf{P}^{\{\s,\s\}}  \end{bmatrix} \ge 0\,,
\]
\renewcommand{\arraystretch}{1.0}
where
\begin{eqnarray*}
\mathbf{B}^{\{m\}} &:=& \mbox{diag}\left(\mathbf{b}^{\{m\}}\right)\,, \\
\mathbf{P}^{\{m,\ell\}} &:=& \mathbf{A}^{\{m,\ell\}}\,^T  \mathbf{B}^{\{m\}} + \mathbf{B}^{\{\ell\}} \mathbf{A}^{\{\ell,m\}} -  \mathbf{b}^{\{\ell\}} \mathbf{b}^{\{m\}}\,^T\,,  \qquad \forall~ m,\ell \in \{\f,\s\}\,.
\end{eqnarray*}
Algebraic stability guarantees unconditional nonlinear stability of the multirate GARK scheme~\cite{SaGu13a}. If the base schemes $\left(A^{\{\f,\f\}},b^{\{\f\}}\right)$ and $\left(A^{\{\s,\s\}},b^{\{\s\}}\right)$ are algebraically stable, one can easily verify that $\mathbf{P}^{\{\f,\f\}} \ge 0$ and $\mathbf{P}^{\{\s,\s\}} \ge 0$ hold, since
\begin{eqnarray*}
\mathbf{P}^{\{\s,\s\}} & = & {P}^{\{\s,\s\}}, \qquad
\mathbf{P}^{\{\f,\f\}}  =  \frac{1}{M^2}\, \mathbf{I}_{M \times M} \otimes  P^{\{\f,\f\}}\,. 
\end{eqnarray*}
%
%
The scheme \eqref{MGARK} is called {\em stability-decoupled} \cite{SaGu13a} if $\mathbf{P}^{\{\f,\s\}}=\mathbf{0}$ (and therefore  $\mathbf{P}^{\{\s,\f\}}=
\mathbf{P}^{\{\f,\s\}}\,^T=\mathbf{0}$). In this case
the individual stability of the slow and fast schemes is a sufficient condition for the stability of the overall multirate method. We have the following result.
\begin{theorem}[Stability of multirate GARK schemes]
Consider a  multirate GARK scheme~\eqref{MGARK} with positive fast weights,
$b^{\{\f\}}\,^i > 0$ for $i=1,\dots,s^{\{\f\}}$.
The scheme is stability-decoupled iff
$\mathbf{A}^{\{\f,\s\}}$ is given by 
\begin{eqnarray}
\label{eqn:aas-for-stability}
A^{\{\f,\s,\lambda\}} & := & B^{\{\f\}}\,^{-1} \left( {b}^{\{\f\}} {b}^{\{\s\}}\,^T -  A^{\{\s,\f,\lambda\}}\,^T B^{\{\s\}} \right)\,, \quad \lambda=1,\ldots,M.  
\end{eqnarray}
\end{theorem}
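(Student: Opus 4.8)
The plan is to prove the iff statement by directly computing the off-diagonal block $\mathbf{P}^{\{\f,\s\}}$ of the algebraic stability matrix from the multirate Butcher tableau \eqref{eqn:mrRK-butcher}, and then showing that $\mathbf{P}^{\{\f,\s\}}=\mathbf{0}$ holds if and only if the coupling coefficients satisfy \eqref{eqn:aas-for-stability}. First I would recall that, by definition, $\mathbf{P}^{\{\f,\s\}} = \mathbf{A}^{\{\f,\s\}}\,^T \mathbf{B}^{\{\f\}} + \mathbf{B}^{\{\s\}} \mathbf{A}^{\{\s,\f\}} - \mathbf{b}^{\{\s\}} \mathbf{b}^{\{\f\}}\,^T$, where $\mathbf{b}^{\{\f\}} = \frac{1}{M}\,[\,b^{\{\f\}}\,^T,\ldots,b^{\{\f\}}\,^T\,]^T$ is the stacked fast weight vector over the $M$ micro-steps, $\mathbf{B}^{\{\f\}} = \frac{1}{M}\,\mathbf{I}_{M\times M}\otimes B^{\{\f\}}$, $\mathbf{B}^{\{\s\}} = B^{\{\s\}}$, and the relevant coupling blocks read off \eqref{eqn:mrRK-butcher} as $\mathbf{A}^{\{\f,\s\}} = [\,A^{\{\f,\s,1\}}\,^T,\ldots,A^{\{\f,\s,M\}}\,^T\,]^T$ (stacked vertically over micro-steps) and $\mathbf{A}^{\{\s,\f\}} = \frac{1}{M}\,[\,A^{\{\s,\f,1\}},\ldots,A^{\{\s,\f,M\}}\,]$ (stacked horizontally).

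Substituting these block forms, $\mathbf{P}^{\{\f,\s\}}$ becomes an $M$-fold stacked object whose $\lambda$-th block row is
\[
\left(\mathbf{P}^{\{\f,\s\}}\right)_{\lambda} \;=\; \frac{1}{M}\,A^{\{\f,\s,\lambda\}}\,^T B^{\{\f\}} \;+\; \frac{1}{M}\, B^{\{\s\}} A^{\{\s,\f,\lambda\}} \;-\; \frac{1}{M}\, b^{\{\s\}} b^{\{\f\}}\,^T\,.
\]
The key observation is that each block decouples across $\lambda$ and the common factor $1/M$ cancels, so $\mathbf{P}^{\{\f,\s\}}=\mathbf{0}$ if and only if $A^{\{\f,\s,\lambda\}}\,^T B^{\{\f\}} + B^{\{\s\}} A^{\{\s,\f,\lambda\}} - b^{\{\s\}} b^{\{\f\}}\,^T = 0$ for every $\lambda=1,\ldots,M$. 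Solving this linear matrix equation for $A^{\{\f,\s,\lambda\}}$: transpose to obtain $B^{\{\f\}} A^{\{\f,\s,\lambda\}} = b^{\{\f\}} b^{\{\s\}}\,^T - A^{\{\s,\f,\lambda\}}\,^T B^{\{\s\}}$, and here the positivity hypothesis $b^{\{\f\}}\,^i > 0$ enters: it guarantees $B^{\{\f\}} = \mathrm{diag}(b^{\{\f\}})$ is invertible, so we may left-multiply by $B^{\{\f\}}\,^{-1}$ to arrive at exactly \eqref{eqn:aas-for-stability}. Both directions follow at once from this equivalence, since the algebra is reversible.

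The only mildly delicate points, rather than genuine obstacles, are bookkeeping ones: getting the stacking conventions and the scaling factors $1/M$ from the tableau \eqref{eqn:mrRK-butcher} exactly right, and confirming that $\mathbf{P}^{\{\s,\f\}}$ carries no independent information (it is just $\mathbf{P}^{\{\f,\s\}}\,^T$ by the symmetry of the definition, so its vanishing is automatic once $\mathbf{P}^{\{\f,\s\}}=\mathbf{0}$). I would also note in passing that this derivation uses only the structure of the coupling blocks and not the internal triangular structure of $\mathbf{A}^{\{\f,\f\}}$; the diagonal blocks $\mathbf{P}^{\{\f,\f\}}$ and $\mathbf{P}^{\{\s,\s\}}$ have already been handled in the discussion preceding the theorem. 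Hence the proof reduces to the one-line matrix manipulation above, with the positivity assumption being precisely what licenses the inversion step.
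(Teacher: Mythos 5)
Your proposal is correct and follows essentially the same route as the paper: compute $\mathbf{P}^{\{\f,\s\}}$ blockwise from the tableau \eqref{eqn:mrRK-butcher}, observe that the $M$ blocks decouple in $\lambda$ and each yields the condition $A^{\{\f,\s,\lambda\}}\,^T B^{\{\f\}} + B^{\{\s\}} A^{\{\s,\f,\lambda\}} - b^{\{\s\}} b^{\{\f\}}\,^T = 0$, then invert $B^{\{\f\}}$ (licensed by the positive weights) to obtain \eqref{eqn:aas-for-stability}. The only quibble is terminological --- the $\lambda$-indexed blocks sit side by side in a block \emph{row} vector of size $s^{\{\s\}}\times M s^{\{\f\}}$ rather than being stacked vertically --- but this does not affect the argument.
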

\begin{proof}
For the non-diagonal terms we have
\begin{eqnarray*}
\mathbf{P}^{\{\f,\s\}} & = & \mathbf{A}^{\{\f,\s\}}\,^T  \mathbf{B}^{\{\f\}} + \mathbf{B}^{\{\s\}} \mathbf{A}^{\{\s,\f\}} -  \mathbf{b}^{\{\s\}} \mathbf{b}^{\{\f\}}\,^T \\
& = & \frac{1}{M} 
\begin{bmatrix}
A^{\{\f,\s,1\}}\,^T B^{\{\f\}} &  A^{\{\f,\s,2\}}\,^T B^{\{\f\}} & \ldots & A^{\{\f,\s,M\}}\,^T B^{\{\f\}} 
\end{bmatrix}  \\
& & + \frac{1}{M}
\begin{bmatrix}
 B^{\{\s\}} A^{\{\s,\f,1\}}  &   B^{\{\s\}} A^{\{\s,\f,2\}} & \ldots & B^{\{\s\}} A^{\{\s,\f,M\}}
\end{bmatrix}  \\
& & 
- 
\frac{1}{M}
\begin{bmatrix}
{b}^{\{\s\}} {b}^{\{\f\}}\,^T &  {b}^{\{\s\}} {b}^{\{\f\}}\,^T
& \ldots & {b}^{\{\s\}} {b}^{\{\f\}}\,^T 
\end{bmatrix} \,.
\end{eqnarray*}
This term is zero if the following $M$  coupling conditions hold
\begin{eqnarray}
\label{eqn:coupling-for-stability}
A^{\{\f,\s,\lambda\}}\,^T B^{\{\f\}} + 
B^{\{\s\}} A^{\{\s,\f,\lambda\}} - 
 {b}^{\{\s\}} {b}^{\{\f\}}\,^T 
 & = & 0, 
 \quad \lambda=1,\ldots,M\,.
\end{eqnarray}
%
%
Solving for $A^{\{\f,\s,\lambda\}}\,^T$ yields~\eqref{eqn:aas-for-stability}. 
\qquad\end{proof}

\subsubsection{Component partitioning}
If we use component partitioning, no additional coupling conditions
have to be fulfilled as shown in~\cite{SaGu13a}, provided that both base schemes $(A^{\{\f,\f\}},b^{\{\f\}})$ and
$(A^{\{\s,\s\}},b^{\{\s\}})$ are algebraically stable.

\subsubsection{Conditional stability for coercive problems}

Following Higueras  \cite{Higueras_2005_monotonicity}, consider partitioned systems \eqref{eqn:additive-ode} where each of the component functions is coercive:
\begin{eqnarray}
\label{eqn:dispersive-condition-f}
\left\langle  f^{\{\s\}} (y)- f^{\{\s\}} (z)\,,\, y-z \right\rangle &\le& \mu\, \left\Vert f^{\{\s\}} (y)- f^{\{\s\}} (z) \right\Vert^2\,, \\
\nonumber
\left\langle  f^{\{\f\}} (y)- f^{\{\f\}} (z)\,,\, y-z \right\rangle &\le& \mu\, M\, \left\Vert f^{\{\f\}} (y)- f^{\{\f\}} (z) \right\Vert^2\,, \quad \mu < 0\,. 
\end{eqnarray}
Assume that there exist $r \ge 0$ such that the following matrix is positive definite
\renewcommand{\arraystretch}{1.25}
\begin{equation}
\label{eqn:conditional-stability-P}
\begin{bmatrix} \mathbf{P}^{\{\f,\f\}} + r\, M\, \mathbf{B}^{\{\f\}} & \mathbf{P}^{\{\f,\s\}} \\ \mathbf{P}^{\{\s,\f\}} &\mathbf{P}^{\{\s,\s\}}  +  r\, \mathbf{B}^{\{\s\}} \end{bmatrix}
\ge 0\,.
\end{equation}
\renewcommand{\arraystretch}{1.0}
The next result extends the one in \cite{SaGu13a}.

\begin{theorem}[Conditional stability of multirate GARK methods]
Consider a partitioned system \eqref{eqn:additive-ode} with coercive component functions
\eqref{eqn:dispersive-condition-f}
solved by a multirate GARK method, and assume that \eqref{eqn:conditional-stability-P} holds.
The solution is nonlinearly stable, in the sense that $\norm{\Delta y_{n+1}} \le \norm{\Delta y_{n}}$,
under the step size restriction 
\[
 H \le  \frac{-2\, \mu }{ r} \,.
\]
\end{theorem}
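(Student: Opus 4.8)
The plan is to adapt the classical algebraic-stability energy estimate of Burrage--Butcher to the GARK setting, exploiting the coercivity condition \eqref{eqn:dispersive-condition-f} to absorb a positive multiple of the stage contributions. First I would introduce the stage differences $\Delta Y_i^{\{m\}} := \widetilde{Y}_i^{\{m\}} - Y_i^{\{m\}}$ and the stage-function differences $\Delta F_i^{\{m\}} := f^{\{m\}}(\widetilde{Y}_i^{\{m\}}) - f^{\{m\}}(Y_i^{\{m\}})$, viewing the multirate scheme as the GARK scheme with tableau \eqref{eqn:mrRK-butcher}, so that $H$ is the single step size and the fast block carries the factor $1/M$. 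Then, writing $\Delta y_{n+1} = \Delta y_n + H\sum_{m}\sum_i \b_i^{\{m\}}\Delta F_i^{\{m\}}$, I would expand $\norm{\Delta y_{n+1}}^2 - \norm{\Delta y_n}^2$ and obtain, exactly as in the unconditional case, the identity
\[
\norm{\Delta y_{n+1}}^2 = \norm{\Delta y_n}^2 + 2H\sum_{m}\sum_i \b_i^{\{m\}} \scalar{\Delta Y_i^{\{m\}},\Delta F_i^{\{m\}}} - H^2 \sum_{m,\ell} \Delta F^{\{m\}\,T}\,\mathbf{P}^{\{m,\ell\}}\,\Delta F^{\{\ell\}},
\]
using the stage equations $\Delta Y_i^{\{m\}} = \Delta y_n + H\sum_{\ell}\sum_j a_{i,j}^{\{m,\ell\}}\Delta F_j^{\{\ell\}}$ to replace $\Delta Y$ by $\Delta y_n$ plus stage terms in the cross product.

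Next I would use the coercivity bounds. The key point is that the fast block of the GARK tableau is scaled by $1/M$, which is precisely why \eqref{eqn:dispersive-condition-f} asks for the factor $M$ in the fast coercivity constant: with $\mathbf{B}^{\{\f\}} = \tfrac1M \mathbf{I}_M \otimes B^{\{\f\}}$ the contributions balance. Concretely, from \eqref{eqn:dispersive-condition-f} and $\mu<0$, for each fast stage $\scalar{\Delta Y_i^{\{\f\}},\Delta F_i^{\{\f\}}} \le \mu M \norm{\Delta F_i^{\{\f\}}}^2$, and for each slow stage $\scalar{\Delta Y_i^{\{\s\}},\Delta F_i^{\{\s\}}} \le \mu \norm{\Delta F_i^{\{\s\}}}^2$; since $\b_i^{\{m\}}>0$ (inherited from the positive base-scheme weights and the $1/M$ scaling) these give
\[
2H\sum_i \b_i^{\{\f\}} \scalar{\Delta Y_i^{\{\f\}},\Delta F_i^{\{\f\}}} \le 2H\mu M \sum_i \b_i^{\{\f\}}\norm{\Delta F_i^{\{\f\}}}^2 = 2H\mu\, \Delta F^{\{\f\}\,T}\left(M\mathbf{B}^{\{\f\}}\right)\Delta F^{\{\f\}},
\]
and analogously $2H\sum_i \b_i^{\{\s\}}\scalar{\cdot,\cdot} \le 2H\mu\,\Delta F^{\{\s\}\,T}\mathbf{B}^{\{\s\}}\Delta F^{\{\s\}}$. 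Substituting into the identity yields
\[
\norm{\Delta y_{n+1}}^2 \le \norm{\Delta y_n}^2 - H^2\,\Delta \mathbf{F}^T\,\mathbf{P}\,\Delta \mathbf{F} + 2H\mu\,\Delta\mathbf{F}^T \begin{bmatrix} M\mathbf{B}^{\{\f\}} & 0 \\ 0 & \mathbf{B}^{\{\s\}}\end{bmatrix}\Delta\mathbf{F},
\]
where $\Delta\mathbf{F} = [\Delta F^{\{\f\}\,T},\Delta F^{\{\s\}\,T}]^T$.

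Finally I would close the argument by a sign check. Rewrite the right-hand side as $\norm{\Delta y_n}^2 - H\,\Delta\mathbf{F}^T\bigl(H\,\mathbf{P} - 2\mu\,\mathrm{diag}(M\mathbf{B}^{\{\f\}},\mathbf{B}^{\{\s\}})\bigr)\Delta\mathbf{F}$. Under the step restriction $H \le -2\mu/r$, i.e. $r \le -2\mu/H$, we have $H\mathbf{P} - 2\mu\,\mathrm{diag}(M\mathbf{B}^{\{\f\}},\mathbf{B}^{\{\s\}}) \succeq H\mathbf{P} + Hr\,\mathrm{diag}(M\mathbf{B}^{\{\f\}},\mathbf{B}^{\{\s\}}) = H\bigl(\mathbf{P} + r\,\mathrm{diag}(M\mathbf{B}^{\{\f\}},\mathbf{B}^{\{\s\}})\bigr)$, and the matrix in parentheses is exactly \eqref{eqn:conditional-stability-P}, hence positive semidefinite by assumption; since $H>0$ and $-2\mu>0$, the quadratic form is nonnegative and $\norm{\Delta y_{n+1}} \le \norm{\Delta y_n}$ follows. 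The main obstacle, and the place to be careful, is bookkeeping the $1/M$ scalings: one must verify that the block $\mathbf{B}^{\{\f\}}$ of the macro-tableau really is $\tfrac1M \mathbf{I}\otimes B^{\{\f\}}$ so that the coercivity factor $M$ in \eqref{eqn:dispersive-condition-f} exactly produces the term $r M \mathbf{B}^{\{\f\}}$ appearing in \eqref{eqn:conditional-stability-P}; the rest is the standard GARK algebraic-stability manipulation already recorded in \cite{SaGu13a}.
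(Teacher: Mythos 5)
Your proof is correct and follows exactly the Burrage--Butcher/Higueras energy argument that the paper implicitly relies on (it states the theorem without proof, deferring to the GARK stability analysis of \cite{SaGu13a}): the standard identity $\norm{\Delta y_{n+1}}^2 = \norm{\Delta y_n}^2 + 2H\sum b_i^{\{q\}}\scalar{\Delta Y_i^{\{q\}},\Delta F_i^{\{q\}}} - H^2\,\Delta\mathbf{F}^T\mathbf{P}\,\Delta\mathbf{F}$, the coercivity bounds with the factor $M$ matching the $1/M$ in $\mathbf{B}^{\{\f\}}$, and the absorption of $-2\mu$ by $Hr$ under the step restriction to recover precisely the matrix in \eqref{eqn:conditional-stability-P}. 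The only hypothesis you use beyond the stated ones is nonnegativity of the weights, which is the same implicit assumption needed throughout the paper's algebraic-stability discussion.
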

If the GARK method is stability decoupled then the weight $r$ in \eqref{eqn:conditional-stability-P} ensures
stability of the slow component for $H \le -2\mu/r$, and of the fast component under the step restriction $h \le -2\mu/(r M)$. 
The multirate GARK method imposes no additional step size restrictions for conditional stability.

\subsection{Order conditions}
As the multirate method \eqref{MGARK} is a particular instance of a  generalized additive Runge-Kutta scheme \eqref{eqn:GARK}, the order conditions follow directly from the derivation in~\cite{SaGu13a}. 
The order conditions for the multirate GARK methods~ \eqref{MGARK} 
are given in Tables~\ref{table_multGARK_a}
and \ref{table_multGARK_b}.
\begin{table}
\renewcommand{\arraystretch}{1.5}
\[
\begin{array}{c|rcl}
\mbox{$p$} & \multicolumn{3}{c}{\mbox{order condition}} \\ \hline
1 & b^{\{\s\}}\,^T \one & = & 1 \\ \hline
2 & b^{\{\s\}}\,^T A^{\{\s,\s\}} \one & = & \frac{1}{2} \\ 
  & b^{\{\s\}}\,^T \left(\sum_{\lambda=1}^M A^{\{\s,\f,\lambda\}} \right) \one & = & \frac{M}{2} \\ \hline
3 & b^{\{\s\}}\,^T \diag(A^{\{\s,\s\}} \one) A^{\{\s,\s\}} \one & =&  \frac{1}{3} \\
  & b^{\{\s\}}\,^T \diag(A^{\{\s,\s\}} \one) \left(\sum_{\lambda=1}^M A^{\{\s,\f,\lambda\}} \right) \one & =&  \frac{M}{3} \\
  & b^{\{\s\}}\,^T \diag(\sum_{\lambda=1}^M A^{\{\s,\f,\lambda\}} \one) A^{\{\s,\s\}} \one & = & \frac{M}{3} \\
  & b^{\{\s\}}\,^T \diag(\sum_{\lambda=1}^M A^{\{\s,\f,\lambda\}} \one) \left(\sum_{\lambda=1}^M A^{\{\s,\f,\lambda\}} \one  \right)& = & \frac{M^2}{3} \\
  & b^{\{\s\}}\,^T A^{\{\s,\s\}} A^{\{\s,\s\}} \one & = & \frac{1}{6} \\
 & b^{\{\s\}}\,^T A^{\{\s,\s\}} \left(\sum_{\lambda=1}^M A^{\{\s,\f,\lambda\}} \one  \right) & = & \frac{M}{6} \\
 & b^{\{\s\}}\,^T \left( \sum_{\lambda=1}^M  A^{\{\s,\f,\lambda\}} A^{\{\f,\s,\lambda\}} \right) \one & = & \frac{M}{6} \\
 & b^{\{\s\}}\,^T \left( 
\sum_{\lambda=1}^M A^{\{\s,\f,\lambda\}} \left\{
 A^{\{\f,\f\}} +(\lambda-1) I \right\} \one \right) & = &  \frac{M^2}{6} 
\end{array}
\]
\renewcommand{\arraystretch}{1.0}
\caption{\label{table_multGARK_a} Slow order conditions for multirate generalized Runge Kutta scheme~\eqref{MGARK}.}
\end{table}
\begin{table}
\[
\renewcommand{\arraystretch}{1.5}
\begin{array}{c|rcl}
\mbox{$p$} & \multicolumn{3}{c}{\mbox{order condition}} \\ \hline
1 & b^{\{\f\}}\,^T \one & = & 1 \\ \hline
2 & b^{\{\f\}}\,^T A^{\{\f,\f\}} \one & = & \frac{1}{2} \\ 
  & b^{\{\f\}}\,^T \left( \sum_{\lambda=1}^M A^{\{\f,\s,\lambda \}} \one \right)& = & \frac{M}{2} \\ \hline
3  & b^{\{\f\}}\,^T \diag(A^{\{\f,\f\}} \one) A^{\{\f,\f\}} \one & =&  \frac{1}{3} \\
  & b^{\{\f\}}\,^T \left( \diag(A^{\{\f,\f\}} \one) \sum_{\lambda=1}^M A^{\{\f,\s,\lambda\}} + \sum_{\lambda=1}^M (\lambda-1) A^{\{\f,\s,\lambda\}} \right) \one & =&  \frac{M^2}{3} \\
  & b^{\{\f\}}\,^T  \sum_{\lambda=1}^M \diag(A^{\{\f,\s,\lambda\}} \one) \left( A^{\{\f,\f\}} + (\lambda-1)  A^{\{\f,\s,\lambda\}} \right)  \one & =&  \frac{M^2}{3} \\
  & b^{\{\f\}}\,^T  \sum_{\lambda=1}^M  \diag(A^{\{\f,\s,\lambda\}} \one) A^{\{\f,\s,\lambda\}}  \one & = & \frac{M}{3} \\
  & b^{\{\f\}}\,^T A^{\{\f,\f\}} A^{\{\f,\f\}} \one & = & \frac{1}{6} \\
  & b^{\{\f\}}\,^T \left( A^{\{\f,\f\}} \sum_{\lambda=1}^M A^{\{\f,\s,\lambda\}} + \sum_{\mu=1}^{M-1} \sum_{\lambda=1}^\mu A^{\{\f,\s,\lambda\}} \right)\one & = & \frac{M^2}{6} \\
  & b^{\{\f\}}\,^T \left( \sum_{\lambda=1}^M A^{\{\f,\s,\lambda\}} \left\{ \sum_{\mu=1}^M A^{\{\s,\f,\mu\}} \right\} \right) \one & = & \frac{M^2}{6} \\
  & b^{\{\f\}}\,^T \left( \sum_{\lambda=1}^M A^{\{\f,\s,\lambda\}} \right) A^{\{\s,\s\}} \one & = &  \frac{M}{6}
\end{array}
\renewcommand{\arraystretch}{1.0}
\]
\caption{\label{table_multGARK_b} Fast order conditions for multirate generalized Runge Kutta scheme~\eqref{MGARK}.}
\end{table}
%
\subsubsection{Simplifying assumptions}
Consider the basis schemes $(A^{\{\f,\f\}},b^{\{\f\}})$ and $(A^{\{\s,\s\}},b^{\{\s\}})$ of order three or higher.
The multirate order conditions simplify considerably if the following conditions (named internal consistency conditions in \cite{SaGu13a}) hold
\begin{subequations}
\label{eqn:simplifying-condition-general}
\begin{eqnarray}
{\mathbf{A}}^{\{\f,\f\}} \one & = & {\mathbf{A}}^{\{\f,\s\}} \one:={\mathbf{c}^{\{\f\}}}, \\
{\mathbf{A}}^{\{\s,\s\}} \one & = & {\mathbf{A}}^{\{\s,\f\}} \one:={\mathbf{c}^{\{\s\}}},
\end{eqnarray}
\end{subequations}
or, in equivalent form
\begin{subequations}
\label{eqn:simplifying-condition}
\begin{eqnarray}
\label{cond.simpla}
\frac{1}{M} A^{\{\f,\f\}} \one + \frac{\lambda-1}{M} \one & = & A^{\{\f,\s,\lambda\}} \one= {\mathbf{c}^{\{\f,\lambda\}}}, 
\quad \lambda = 1,\dots,M\,,\\
\label{cond.simplb}
\frac{1}{M} \sum_{\lambda=1}^M A^{\{\s,\f,\lambda\}} \one & = &
 A^{\{\s,\s\}} \one = {\mathbf{c}^{\{\s\}}}.
\end{eqnarray}
\end{subequations}
If \eqref{eqn:simplifying-condition} hold then all order two conditions and most of the order three conditions are automatically fulfilled. The only remaining order three conditions are
\begin{subequations}
\label{remaining.order3-general}
\begin{eqnarray}
\left({\mathbf{b}}^{\{\s\}}\right)^T {\mathbf{A}}^{\{\s,\f\}} {\mathbf{c}^{\{\f\}}} & = & \frac{1}{6}, \\
\left({\mathbf{b}}^{\{\f\}}\right)^T {\mathbf{A}}^{\{\f,\s\}} {\mathbf{c}^{\{\s\}}} & = & \frac{1}{6},
\end{eqnarray}
\end{subequations}
or, in equivalent form,
\begin{subequations}
\label{remaining.order3}
\begin{eqnarray}
b^{\{\s\}}\,^T \sum_{\lambda=1}^M A^{\{\s,\f,\lambda\}} \left( A^{\{\f,\f\}} +(\lambda-1) I \right) \one & = & \frac{M^2}{6}, \\
\label{last.coupling}
b^{\{\f\}}\,^T \left( \sum_{\lambda=1}^M A^{\{\f,\s,\lambda\}} \right)
A^{\{\s,\s\}} \one & = & \frac{M}{6}\,.
\end{eqnarray}
\end{subequations}

When only the first fast microstep is coupled to the slow part \eqref{eqn:GARK-MR-slow-stage-one-coupled-A} the second simplifying condition~\eqref{cond.simplb} becomes
\begin{subequations}
\begin{equation}
\frac{1}{M} A^{\{\s,\f,1\}} \one = {\mathbf{c}^{\{\s\}}}; \quad A^{\{\s,\f,\lambda\}} = 0 \,,~~~ \lambda=2,\ldots,M. 
\end{equation}
The first simplifying condition~\eqref{cond.simpla} can be fulfilled by setting
\begin{equation}
\label{cond.simpl2}
A^{\{\f,\s,\lambda\}}  =  A^{\{\f,\s,1\}} + F(\lambda) \,,~~~ \lambda=1,\ldots,M
\end{equation}
with 
\begin{equation}
A^{\{\f,\s,1\}} \one = \frac{1}{M} A^{\{\f,\f\}} \one \quad \mbox{and} \quad F(\lambda) \one = \frac{\lambda-1}{M} \one,
\end{equation}
\end{subequations}
which transforms the last order three coupling condition~\eqref{last.coupling} into
\begin{eqnarray}
b^{\{\f\}}\,^T \left( M A^{\{\f,\s,1\}} +
\sum_{\lambda=1}^M F(\lambda) \right)  A^{\{\s,\s\}} \one & = & \frac{M}{6}.
\end{eqnarray}

\subsubsection{Additive partitioning}

We now consider the case of additive partitioning and impose the coupling conditions \eqref{eqn:coupling-for-stability} for stability. 
Following \eqref{eqn:aas-for-stability} we  set
\begin{eqnarray}
\label{add.part.stab}
\mathbf{A^{\{\f,\s\}}} := 
\begin{bmatrix}
\one {b}^{\{\s\}}\,^T - B^{\{\f\}}\,^{-1} A^{\{\s,\f,1\}}\,^T B^{\{\s\}}  \\
\vdots \\
\one {b}^{\{\s\}}\,^T - B^{\{\f\}}\,^{-1} A^{\{\s,\f,M\}}\,^T B^{\{\s\}} 
\end{bmatrix}.
\end{eqnarray}
If the base methods $(A^{\{\s,\s\}},b^{\{\s\}})$ and
$(A^{\{\f,\f\}},b^{\{\f\}})$ are algebraically stable then \eqref{add.part.stab} ensures the nonlinear stability of the overall method.
However, the stability conditions \eqref{add.part.stab} cannot be fulfilled when the
simplifying conditions~\eqref{eqn:simplifying-condition-general} hold.
\begin{theorem}[Internally consistent multirate GARK schemes are not stability decoupled]
When only the first fast microstep is coupled to the slow part \eqref{eqn:GARK-MR-slow-stage-one-coupled-A}, the  stability conditions~\eqref{add.part.stab} are not compatible with the first simplifying condition~\eqref{cond.simpla} for multirate GARK schemes with $M>1$.
\end{theorem}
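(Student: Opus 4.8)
\emph{Proof plan.}
The strategy is to use the stability prescription \eqref{add.part.stab} to pin down the slow--fast coupling matrices $A^{\{\f,\s,\lambda\}}$ in the one-coupled setting, and then to confront the last of them with the internal consistency requirement \eqref{cond.simpla}.

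First I would spell out that ``only the first fast microstep is coupled to the slow part'' is precisely the block structure \eqref{eqn:GARK-MR-slow-stage-one-coupled-A}, that is, $A^{\{\s,\f,\lambda\}} = 0$ for $\lambda = 2,\dots,M$. Substituting $A^{\{\s,\f,\lambda\}} = 0$ into the $\lambda$-th block row of \eqref{add.part.stab} (equivalently, into \eqref{eqn:aas-for-stability}) leaves only
\[
A^{\{\f,\s,\lambda\}} = B^{\{\f\}}\,^{-1} b^{\{\f\}}\, b^{\{\s\}}\,^T = \one\, b^{\{\s\}}\,^T, \qquad \lambda = 2,\dots,M,
\]
where I used $B^{\{\f\}}\,^{-1} b^{\{\f\}} = \one$, which holds because the fast weights are positive.

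The key step comes next. Since $M>1$, the index $\lambda=M$ lies in the range $\{2,\dots,M\}$, so the identity above applies to it; multiplying it by $\one$ on the right and using $b^{\{\s\}}\,^T\one = 1$ (consistency of the slow base scheme) yields $A^{\{\f,\s,M\}}\one = \one$. On the other hand, the first simplifying condition \eqref{cond.simpla} at $\lambda = M$ demands $A^{\{\f,\s,M\}}\one = \frac{1}{M} A^{\{\f,\f\}}\one + \frac{M-1}{M}\one$. Equating the two expressions and cancelling forces $A^{\{\f,\f\}}\one = \one$, that is, the fast abscissae degenerate to $c^{\{\f\}} = \one$. This is the step I expect to carry the argument; it is also the only place where $M>1$ enters --- for $M=1$ the sole microstep is the coupled one and the two conditions are compatible.

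Finally I would rule out $c^{\{\f\}} = \one$ using the order of the fast base scheme (three or higher in this subsection, hence in particular at least two): the order-one and order-two conditions from Table~\ref{table_multGARK_b}, namely $b^{\{\f\}}\,^T\one = 1$ and $b^{\{\f\}}\,^T A^{\{\f,\f\}}\one = \frac{1}{2}$, would then collapse to $1 = b^{\{\f\}}\,^T\one = b^{\{\f\}}\,^T A^{\{\f,\f\}}\one = \frac{1}{2}$, which is impossible. Hence \eqref{add.part.stab} and \eqref{cond.simpla} cannot hold simultaneously once $M>1$. I do not foresee a genuine obstacle; the only points needing care are checking that $\lambda = M$ really falls in the decoupled range $\{2,\dots,M\}$ (which is exactly what $M>1$ provides) and recalling that the fast base scheme carries at least second order throughout this part of the paper.
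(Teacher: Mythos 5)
Your proposal is correct and follows essentially the same route as the paper: substitute $A^{\{\s,\f,\lambda\}}=0$ for $\lambda\ge 2$ into the stability prescription \eqref{add.part.stab}, compare the resulting $A^{\{\f,\s,\lambda\}}\one=\one$ with the internal consistency requirement \eqref{cond.simpla}, and derive a contradiction. In fact you are slightly more careful than the paper at the final step: the paper stops at $\frac{1}{M}A^{\{\f,\f\}}\one+\frac{\lambda-1}{M}\one=\one$ for all $\lambda\ge 2$ and declares a contradiction (immediate only for $M\ge 3$, where the left side varies with $\lambda$), whereas you explicitly close the remaining case by showing $c^{\{\f\}}=\one$ is incompatible with the order-two condition of the fast base scheme.
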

\begin{proof}
Assume that the multirate GARK scheme fulfills the first
simplifying conditions \eqref{eqn:simplifying-condition} and the stability decoupling condition \eqref{add.part.stab}  for all $1 \le \lambda \le M$:
\begin{subequations}
\label{order3.gen.sc}
\begin{eqnarray}
\frac{1}{M} A^{\{\f,\f\}} \one + \frac{\lambda-1}{M} \one & = & A^{\{\f,\s,\lambda\}} \one, \\[3pt]
\one {b}^{\{\s\}}\,^T - B^{\{\f\}}\,^{-1} A^{\{\s,\f,\lambda\}}\,^T B^{\{\s\}} & = & {A^{\{\f,\s,\lambda\}}}.
\end{eqnarray}
\end{subequations}
Eliminating ${A^{\{\f,\s,\lambda\}}}$ leads to
\begin{subequations}
\label{order3.gen.sc2}
\begin{eqnarray}
\frac{1}{M} A^{\{\f,\f\}} \one + \frac{\lambda-1}{M} \one & = & \left( I
- B^{\{\f\}}\,^{-1} A^{\{\s,\f,\lambda\}}\,^T B^{\{\s\}} \right) \one, 
\label{order3.gen.sc2.1} 
\end{eqnarray}
\end{subequations}
When only the first fast microstep is coupled to the slow part \eqref{eqn:GARK-MR-slow-stage-one-coupled-A}, one gets for $M > 1$
$$
\frac{1}{M} A^{\{\f,\f\}} \one + \frac{\lambda-1}{M} \one  =   \one
\quad \forall \lambda \ge 2,
$$
yielding a contradiction.
\end{proof}

Consequently, if the base schemes are of order at least three  the
stability decoupling conditions~\eqref{add.part.stab}  require to work with the order conditions
given in Table~\ref{table_multGARK_noncomponent}. 
Note that condition pairs (i) and (viii),  (ii) and (xiv), (iii) and (xiii), (vi) and (xi), as well as (vii) and (x) coincide.
The coefficients of a stability decoupled multirate GARK scheme have to fulfill the following nine independent order conditions:
\[
\renewcommand{\arraystretch}{1.75}
\begin{array}{rrcl}
(iv) &  b^{\{\f\}}\,^T \left( \sum_{\lambda=1}^M \diag(D^\lambda \one) A^{\{\f,\f\}} + \sum_{\lambda=1}^M (\lambda-1) D^\lambda \right) \one & = & \frac{M^2}{6}, \\
(v) &  b^{\{\f\}}\,^T \left( \sum_{\lambda=1}^M A^{\{\f,\f\}} D^\lambda + \sum_{\mu=1}^{M-1} \sum_{\lambda=1}^\mu D^\lambda \right) \one & = & \frac{M^2}{3}, \\
(viii) & b^{\{\s\}}\,^T \left( \sum_{\lambda=1}^M A^{\{\s,\f,\lambda\}} \right) \one & = & \frac{M}{2}, \\ 
(ix) & b^{\{\s\}}\,^T \diag(A^{\{\s,\s\}} \one) \left( \sum_{\lambda=1}^M A^{\{\s,\f,\lambda\}} \right) \one & = & \frac{M}{3}, \\
(x) & b^{\{\s\}}\,^T \diag(\sum_{\lambda=1}^M A^{\{\s,\f,\lambda\}} \one) A^{\{\s,\s\}} \one & = & \frac{M}{3}, \\
(xi) & b^{\{\s\}}\,^T \diag(\sum_{\lambda=1}^M A^{\{\s,\f,\lambda\}} \one) \left( \sum_{\lambda=1}^M A^{\{\s,\f,\lambda\}} \right) \one & =&  \frac{M^2}{3}, \\
(xii) & b^{\{\s\}}\,^T A^{\{\s,\s\}} \left( \sum_{\lambda=1}^M A^{\{\s,\f,\lambda\}} \right)\one & = & \frac{M}{6}, \\
(xiii)   & b^{\{\s\}}\,^T \left( \sum_{\lambda=1}^M A^{\{\s,\f,\lambda\}} D^\lambda \right) \one & = & \frac{M}{3}, \\
(xiv)  & b^{\{\s\}}\,^T \left( \sum_{\lambda=1}^M A^{\{\s,\f,\lambda\}} \left( A^{\{\f,\f\}} +(\lambda-1) I \right) \right) \one & = &  \frac{M^2}{6}, 
\end{array}
\renewcommand{\arraystretch}{1.0}
\]
where
\begin{equation}
\label{eqn:D-lambda}
D^\lambda:= B^{\{\f\}}\,^{-1}\; A^{\{\s,\f,\lambda\}}\,^T\; B^{\{\s\}}\,.
\end{equation}

\begin{table}
\[
\renewcommand{\arraystretch}{1.5}
\begin{array}{ccrcl}
\mbox{No.} & \mbox{Order} & \multicolumn{3}{c}{\mbox{Order condition}} \\
(i) & 2  & b^{\{\f\}}\,^T \left( \sum_{\lambda=1}^M D^\lambda \right) \one & = & \frac{M}{2} \\ 
(ii) & 3 & b^{\{\f\}}\,^T \left( \diag(A^{\{\f,\f\}} \one) \left( \sum_{\lambda=1}^M D^\lambda \right) + \sum_{\lambda=1}^M (\lambda-1)D^\lambda \right)  \one & =&  \frac{M^2}{6} \\
(iii) & 3 & b^{\{\f\}}\,^T \left( \sum_{\lambda=1}^M \diag(D^\lambda \one) D^\lambda \right) \one & = & \frac{M}{3} \\
(iv) & 3 & b^{\{\f\}}\,^T \left( \sum_{\lambda=1}^M \diag(D^\lambda \one) A^{\{\f,\f\}} + \sum_{\lambda=1}^M (\lambda-1) D^\lambda \right) \one & = & \frac{M^2}{6} \\
(v) & 3 &  b^{\{\f\}}\,^T \left( \sum_{\lambda=1}^M A^{\{\f,\f\}} D^\lambda + \sum_{\mu=1}^{M-1} \sum_{\lambda=1}^\mu D^\lambda \right) \one & = & \frac{M^2}{3} \\
(vi) & 3 & b^{\{\f\}}\,^T \left( \sum_{\lambda=1}^M D^\lambda \left( \sum_{\mu=1}^M   A^{\{\s,\f,\mu\}} \right) \right) \one & = & \frac{M^2}{3} \\
(vii) & 3 & b^{\{\f\}}\,^T \left( \sum_{\lambda=1}^M D^\lambda \right) A^{\{\s,\s\}} \one & = & \frac{M}{3} \\ \hline 
(viii) & 2  & b^{\{\s\}}\,^T \left( \sum_{\lambda=1}^M A^{\{\s,\f,\lambda\}} \right) \one & = & \frac{M}{2} \\ 
(ix) & 3 & b^{\{\s\}}\,^T \diag(A^{\{\s,\s\}} \one) \left( \sum_{\lambda=1}^M A^{\{\s,\f,\lambda\}} \right) \one & = & \frac{M}{3} \\
(x) & 3 & b^{\{\s\}}\,^T \diag(\sum_{\lambda=1}^M A^{\{\s,\f,\lambda\}} \one) A^{\{\s,\s\}} \one & = & \frac{M}{3} \\
(xi) & 3 & b^{\{\s\}}\,^T \diag(\sum_{\lambda=1}^M A^{\{\s,\f,\lambda\}} \one) \left( \sum_{\lambda=1}^M A^{\{\s,\f,\lambda\}} \right) \one & =&  \frac{M^2}{3} \\
(xii)& 3 & b^{\{\s\}}\,^T A^{\{\s,\s\}} \left( \sum_{\lambda=1}^M A^{\{\s,\f,\lambda\}} \right)\one & = & \frac{M}{6} \\
(xiii) &3  & b^{\{\s\}}\,^T \left( \sum_{\lambda=1}^M A^{\{\s,\f,\lambda\}} D^\lambda \right) \one & = & \frac{M}{3} \\
(xiv) & 3 & b^{\{\s\}}\,^T \left( \sum_{\lambda=1}^M A^{\{\s,\f,\lambda\}} \left( A^{\{\f,\f\}} +(\lambda-1) I \right) \right) \one & = &  \frac{M^2}{6} 
\end{array}
\renewcommand{\arraystretch}{1.0}
\]
\caption{\label{table_multGARK_noncomponent} Order conditions (up to order three) for multirate generalized Runge Kutta scheme~\eqref{MGARK} with arbitrary partitioning. We use the definition \eqref{eqn:D-lambda} for $D^\lambda$.}
\end{table}

\begin{example}
Consider the case  \eqref{eqn:mr-same-scheme} where the same scheme is used for both the fast and the slow components. The order two coupling condition reads
\[
(viii)\quad  b^T\, \left( \sum_{\lambda=1}^M A^{\{\s,\f,\lambda\}} \right) \one  =  \frac{M}{2} 
\]
and the additional order three conditions are
\renewcommand{\arraystretch}{1.75}
\[
\begin{array}{rrcl}
(iv) &  b^T \left( \sum_{\lambda=1}^M \diag(B^{-1} A^{\{\s,\f,\lambda\}} b) A \one  + \sum_{\lambda=1}^M (\lambda-1) B^{-1} A^{\{\s,\f,\lambda\}} b \right)& = & \frac{M^2}{6} \\
(v) &  b^T \left( \sum_{\lambda=1}^M A B^{-1} A^{\{\s,\f,\lambda\}} + \sum_{\mu=1}^{M-1} \sum_{\lambda=1}^\mu B^{-1} A^{\{\s,\f,\lambda\}}  \right) b & = & \frac{M^2}{3} \\
(ix) & b^T \diag(A \one) \left( \sum_{\lambda=1}^M A^{\{\s,\f,\lambda\}} \right) \one & = & \frac{M}{3} \\
(x) & b^T \diag(\sum_{\lambda=1}^M A^{\{\s,\f,\lambda\}} \one) A \one & = & \frac{M}{3} \\
(xi) & b^T \diag(\sum_{\lambda=1}^M A^{\{\s,\f,\lambda\}} \one) \left( \sum_{\lambda=1}^M A^{\{\s,\f,\lambda\}} \right) \one & =&  \frac{M^2}{3} \\
(xii) & b^T A \left( \sum_{\lambda=1}^M A^{\{\s,\f,\lambda\}} \right)\one & = & \frac{M}{6} \\
(xiii)   & b^T \left( \sum_{\lambda=1}^M A^{\{\s,\f,\lambda\}} B^{-1} A^{\{\s,\f,\lambda\}} \right) b & = & \frac{M}{3} \\
(xiv)  & b^T \left( \sum_{\lambda=1}^M A^{\{\s,\f,\lambda\}} \left( A +(\lambda-1) I \right) \right) \one & = &  \frac{M^2}{6} 
\end{array}
\]
\renewcommand{\arraystretch}{1.0}
\renewcommand{\arraystretch}{1.0}

To easily construct schemes with an order higher than one, we set
now $A^{\{\s,\f,\lambda\}}=0$ for $\lambda=2,\ldots,M$.
For $s=2$ the only second order condition leads to
\renewcommand{\arraystretch}{1.25}
\[
A^{\{\s,\f,1\}} = \begin{bmatrix} 0 & 0 \\ \frac{M}{2 b_2} & 0  \end{bmatrix},
\quad
D^1 = \begin{bmatrix} 0 & \frac{M}{2 b_1} \\ 0 & 0  \end{bmatrix} =
\frac{b_2}{b_1}A^{\{\s,\f\}}\,^T,
\]
\renewcommand{\arraystretch}{1.0}
assuming $b_1 \neq 0$ and since $b_2 \neq 0$ for a method of order two. This choice preserves the explicit structure of the scheme, if the underlying scheme is explicit. 
Note that the basic method does not depend on $M$ -- only the coefficients of the coupling matrices $A^{\{\s,\f\}}$ and $D$ depend on $M$.
\end{example}
\subsubsection{Component partitioning}
%
For component partitioning the stability of the fast and slow base schemes ensures the overall stability, and no coupling conditions are needed. Consequently, 
there are additional degrees of freedom for choosing $\mathbf{A^{\{\f,\s\}}}$.
The general order conditions for this case have been given in
Tables~\ref{table_multGARK_a}
and \ref{table_multGARK_b}.

\begin{example}[First fast step coupling]
We construct a multirate scheme from two arbitrary 
order three basis schemes $(b^{\{\f\}},A^{\{\f,\f\}})$ and $(b^{\{\s\}},A^{\{\s,\s\}})$ 
by coupling only the first active microstep to the slow part and 
keeping the flexibility in the coupling matrices $A^{\{\f,\s,\lambda\}}$
The order two coupling conditions  are 
\begin{subequations}
\begin{eqnarray}
b^{\{\s\}}\,^T A^{\{\s,\f,1\}}  \one & = & \frac{M}{2}, \\
b^{\{\f\}}\,^T \left( \sum_{\lambda=1}^M A^{\{\f,\s,\lambda \}} \one \right)& = & \frac{M}{2}\,.
\end{eqnarray}
The order three conditions read
\begin{eqnarray}
b^{\{\s\}}\,^T \diag(A^{\{\s,\s\}} \one) A^{\{\s,\f,1\}}  \one & =&  \frac{M}{3}, \\
b^{\{\s\}}\,^T  \diag(A^{\{\s,\f,1\}} \one) A^{\{\s,\s\}} \one & = & \frac{M}{3}, \\
b^{\{\s\}}\,^T \diag(A^{\{\s,\f,1\}} \one) A^{\{\s,\f,1\}} \one  & = & \frac{M^2}{3}, \\
 b^{\{\s\}}\,^T A^{\{\s,\s\}} A^{\{\s,\f,1\}} \one   & = & \frac{M}{6}, \\
b^{\{\s\}}\,^T  A^{\{\s,\f,1\}} A^{\{\f,\s,1\}} \one & = & \frac{M}{6} , \\
b^{\{\s\}}\,^T A^{\{\s,\f,1\}}  A^{\{\f,\f\}} \one  & = &  \frac{M^2}{6}, \\
b^{\{\f\}}\,^T \left( \diag(A^{\{\f,\f\}} \one) \sum_{\lambda=1}^M A^{\{\f,\s,\lambda\}} + \sum_{\lambda=1}^M (\lambda-1) A^{\{\f,\s,\lambda\}} \right) \one & =&  \frac{M^2}{3}, \\
b^{\{\f\}}\,^T \left( \sum_{\lambda=1}^M \diag(A^{\{\f,\s,\lambda\}} \one) \left\{ A^{\{\f,\f\}} + (\lambda-1)  A^{\{\f,\s,\lambda\}} \right\} \right) \one & =&  \frac{M^2}{3}, \\
b^{\{\f\}}\,^T \left( \sum_{\lambda=1}^M \diag(A^{\{\f,\s,\lambda\}} \one) A^{\{\f,\s,\lambda\}} \right) \one & = & \frac{M}{3}, \\
b^{\{\f\}}\,^T \left( A^{\{\f,\f\}} \sum_{\lambda=1}^M A^{\{\f,\s,\lambda\}} + \sum_{\mu=1}^{M-1} \sum_{\lambda=1}^\mu A^{\{\f,\s,\lambda\}} \right)\one & = & \frac{M^2}{6}, \\
b^{\{\f\}}\,^T \left( \sum_{\lambda=1}^M A^{\{\f,\s,\lambda\}}\right)  A^{\{\s,\f,1\}} \one & = & \frac{M^2}{6}, \\
b^{\{\f\}}\,^T \left( \sum_{\lambda=1}^M A^{\{\f,\s,\lambda\}} \right) A^{\{\s,\s\}} \one & = &  \frac{M}{6}. 
\end{eqnarray}
\end{subequations}
The only degrees of freedom to fulfill these conditions are the parameters of the coupling coefficient matrices  $A^{\{\s,\f,1\}}$ and $A^{\{\f,\s,\lambda\}}$ ($\lambda=1,\ldots,M$).
\end{example}

\section{Traditional multirate Runge Kutta methods formulated in the GARK framework}\label{sec:mgark-traditional}

In this section we discuss several important multirate Runge Kutta methods proposed in the literature,
and show how they can be represented and analyzed in the GARK framework.

\subsection{Kvaerno-Rentrop methods}
The mRK class of multirate Runke-Kutta methods proposed by Kvaerno and Rentrop ~\cite{Kvaerno_1999_MR-RK}
can be formulated in the GARK framework. The mRK schemes are based on
coupling only the first fast microstep to the slow part, which, in this paper's notation, reads 
\[
A^{\{\s,\f\}}:=A^{\{\s,\f,1\}}, \quad
A^{\{\s,\f,\lambda\}}=0, \quad \lambda=2,\ldots,M.
\]
Kvaerno and Rentrop obtain order conditions that are nearly independent of $M$ by making the following choices of coefficients.
\begin{itemize}
\item[a)] The slow to fast coupling is
\begin{eqnarray*}
A^{\{\f,\s,\lambda+1\}} &:=& A^{\{\f,\s\}} + F(\lambda) \\
F(\lambda) &=& \one^{\{\f,\s\}}\, \begin{bmatrix}  \eta_1(\lambda) \dots \eta_{s^{\{\f,\s\}}}(\lambda) \end{bmatrix}\,,
\quad \lambda=0,\ldots,M-1\,,
\end{eqnarray*}
i.e., $F_{i,j}(\lambda)=\eta_j(\lambda)$. The scalar functions $\eta_j(\lambda)$ fulfill the condition 
\begin{equation}
\label{eqn:KR-simplifying}
\sum_{j=1}^{s^{\{\f,\s\}}} \eta_j(\lambda) = \lambda \quad \Leftrightarrow \quad
F(\lambda)\, \one= \lambda\, \one\,.
\end{equation}
\item[b)] The matrix $A^{\{\s,\f\}}$ is scaled by $M$, and the matrix $A^{\{\f,\s\}}$ is scaled by $1/M$, i.e., the function evaluations of the active part are always done in the microstep size, and the ones in the slow part in the macrostep size; 
\end{itemize}
Note that this choice corresponds to  the simplifying conditions~\eqref{eqn:simplifying-condition-general} with the special choice~\eqref{cond.simpl2}. 
With the notation 
\[
c^{\f}:=  A^{\{\f,\f\}} \one^{\f}\quad \textnormal{and} \quad 
c^{\s}:=  A^{\{\s,\s\}} \one^{\s}
\]
the corresponding GARK order conditions 
are given in Table~\ref{tab:mRK-order}.

Choose two order three schemes $(b^{\{\f\}},A^{\{\f,\f\}})$ and
$(b^{\{\s\}},A^{\{\s,\s\}})$. To obtain a multirate method of order three  the free parameters $A^{\{\f,\s\}}$, $A^{\{\s,\f\}}$, and $F(\lambda)$  
have to fulfill the two remaining order three coupling conditions, together with the three simplifying conditions,
\begin{subequations}
\label{eqn:coupling-conditions}
\begin{eqnarray}
b^{\{\s\}}\,^T A^{\{\s,\f\}} c^{\f} & = & \frac{M}{6},  \label{meth2.a} \\
b^{\{\f\}}\,^T \left(A^{\{\f,\s\}} + \frac{1}{M} \sum_{\lambda=0}^{M-1} F(\lambda) \right) c^{\s} & = & \frac{M}{6},   \label{meth2.b} \\
A^{\{\f,\s\}} \one & = & c^{\f},   \label{meth2.c} \\
A^{\{\s,\f\}} \one & = & c^{\s} ,  \label{meth2.d} \\
F(\lambda) \one & = & \lambda \label{meth2.e}\,,  \quad \lambda=0,\ldots,M-1.
\end{eqnarray}
\end{subequations}
Note that the additional condition 
\[
 b^{\{\f\}}\,^T F(\lambda) \, c^{\s}  =  \frac{\lambda (\lambda+1)}{2M}
\]
imposed by Kvaerno and Rentrop~\cite{Kvaerno_1999_MR-RK}, which transforms the second condition~\eqref{meth2.b} into
\[
b^{\{\f\}}\,^T A^{\{\f,\s\}} c^{\s}  =  \frac{1}{6M},
\]
ensures that the active solution has order three {\em at all microsteps.}

\begin{table}
\[
\renewcommand{\arraystretch}{1.5}
\begin{array}{c|rcl|rcl}
 \mbox{Order} & \multicolumn{3}{c|}{\mbox{Fast order condition}} & \multicolumn{3}{c}{\mbox{Slow order condition}} \\ \hline
 1 & b^{\{\f\}}\,^T \one & = & 1 & b^{\{\s\}}\,^T \one & = & 1 \\ \hline
 2 & b^{\{\f\}}\,^T c^{\f} & = & \frac{1}{2} & b^{\{\s\}}\,^T c^{\s} & = & \frac{1}{2} \\ 
 3 & b^{\{\f\}}\,^T \diag(c^{\f}) c^{\f} & =&  \frac{1}{3}  & b^{\{\s\}}\,^T \diag(c^{\s}) c^{\s} & =&  \frac{1}{3} \\
  & b^{\{\f\}}\,^T A^{\{\f,\f\}} c^{\f} & = & \frac{1}{6} & b^{\{\s\}}\,^T A^{\{\s,\s\}} c^{\s} & = & \frac{1}{6} \\
  & b^{\{\f\}}\,^T \left(A^{\{\f,\s\}} + \frac{1}{M} \sum_{\lambda=0}^{M-1} F(\lambda) \right) c^{\s} & = & \frac{M}{6}  & b^{\{\s\}}\,^T A^{\{\s,\f\}} c^{\f} & = & \frac{M}{6} \\
\end{array}
\renewcommand{\arraystretch}{1.0}
\]
\caption{\label{tab:mRK-order} Order conditions for the mRK~\cite{Kvaerno_1999_MR-RK} multirate GARK scheme with $F(\lambda) \one =\lambda \one$.}
\end{table}

\begin{example}[A multirate GARK schemes of order 3 with only two stages]
We are now interested in constructing schemes of order 3 with only 2 stages. 
Note that the overall scheme will be stable, if the basic schemes are stable due to componentwise partitioning. 
We use the simplifying conditions~\eqref{meth2.c} and~\eqref{meth2.d}
together with
\[
b:=b^{\{\f\}}=b^{\{\s\}},\quad 
A:=A^{\{\f\}}=A^{\{\s\}}, \quad 
c:=c^{\{\f\}}=c^{\{\s\}}\,, \quad 
\widetilde{A}:=A^{\{\f,\s\}} = A^{\{\s,\f\}}. 
\]
If we use an order three basis scheme $(b,A)$ , the remaining order conditions 
\eqref{eqn:coupling-conditions} for the free parameters $\widetilde{A}$ and $F(\lambda)$  read
\begin{subequations}
\label{eqn:coupling-conditions-particular}
\begin{eqnarray}
b^T \widetilde{A} \, c & = & \frac{M}{6},  \label{meth22.a} \\
b^T \left(\widetilde{A} + \frac{1}{M} \sum_{\lambda=0}^{M-1} F(\lambda) \right) c & = & \frac{M}{6},   \label{meth22.b} \\
\widetilde{A}\, \one & = & c ,  \label{meth22.c} \\
F(\lambda)\, \one & = & \lambda. \label{meth22.e}  
\end{eqnarray}
\end{subequations}
The first two conditions coincide if we set
\[
b^T \sum_{\lambda=0}^{M-1} F(\lambda)   \,
c = 0.
\]
When $c_{2} \neq c_{1}$ the choice
\[
\eta_1(\lambda) = \frac{c_{2}}{c_{2} - c_{1}} \lambda, \quad
\eta_2(\lambda) = - \frac{c_{1}}{c_{2} - c_{1}} \lambda,
\]
fulfills this additional condition and condition \eqref{meth22.e} at the same time. The remaining conditions~\eqref{meth22.a} and \eqref{meth22.c} can be fulfilled, for example, by setting
\[
\widetilde{A} = 
\begin{bmatrix}
c_1 & 0 \\
c_2-p & p
\end{bmatrix}
\quad \mbox{with}  \quad
p = \frac{\frac{M}{6}- b_1 c_1^2 - b_2 c_1 c_2}{b_2(c_2-c_1)}.
\]
This yields the order three GARK scheme given by the extended Butcher tableau
\[
\renewcommand{\arraystretch}{1.5}
\begin{array}{cccc|c}
\frac{1}{M} A     &          0                   & \cdots & 0 & \widetilde{A} \\
\frac{1}{M} \mathbf{1} b^T & \frac{1}{M} A        & \cdots & 0 & \widetilde{A} + F(1) \\
\vdots                     &                             & \ddots &   & \\
\frac{1}{M} \mathbf{1} b^T & \frac{1}{M} \mathbf{1} b^T   & \ldots & \frac{1}{M} A & \widetilde{A} + F(M-1)\\ \hline
\frac{1}{M} \widetilde{A} & 0 & \ldots & 0 & A \\ \hline
\frac{1}{M} b^T & \frac{1}{M} b^T & \frac{1}{M} b^T & \frac{1}{M} b^T & \frac{1}{M} b^T
\end{array}
\renewcommand{\arraystretch}{1.0}
\]

For the RADAU-IA scheme ($p=3,s=2$) we obtain
\[
\renewcommand{\arraystretch}{1.5}
\begin{array}{c|cr}
c & A \\ \hline & b^T \end{array}
:=
\begin{array}{c|cr}
0 & \frac{1}{4} & - \frac{1}{4} \\
\frac{2}{3} & \frac{1}{4} & \frac{5}{12} \\ \hline
& \frac{1}{4} & \frac{3}{4}
\end{array}\,;
\quad
F(\lambda) = \begin{bmatrix} \lambda & 0 \\ \lambda & 0 \end{bmatrix},
\quad \widetilde{A}= \begin{bmatrix} 0 & 0 \\ \frac{2}{3}-\frac{M}{3} & \frac{M}{3} 
\end{bmatrix}\,,
\renewcommand{\arraystretch}{1.0}
\]
and for RADAU-IIA ($p=3,s=2$) 
\[
\renewcommand{\arraystretch}{1.5}
\begin{array}{c|cr}
c & A \\ \hline & b^T \end{array}
:=
\begin{array}{c|cr}
0 & \frac{1}{4} & - \frac{1}{4} \\
\frac{2}{3} & \frac{1}{4} & \frac{5}{12} \\ \hline
& \frac{1}{4} & \frac{3}{4}
\end{array}\,;
\quad
F(\lambda) = \begin{bmatrix} \frac{3}{2} \lambda & - \frac{1}{2} \lambda \\
 \frac{3}{2} \lambda & - \frac{1}{2} \lambda \end{bmatrix},
\quad \widetilde{A}= \begin{bmatrix} \frac{1}{3} & 0 \\ 1-(M-1) & M-1 
\end{bmatrix}.
\renewcommand{\arraystretch}{1.0}
\]

\end{example}

\subsection{Dense output coupling}
The use of dense output interpolation for coupling the slow and fast components was developed by Savcenco, Hundsdorfer, and co-workers in the context of Rosenbrock methods \cite{Savcenco_2007TR,Savcenco_2008,Savcenco_2009,Savcenco_2005,Savcenco_2007_stability,Savcenco_2007_MRstrategy}. This approach can be immediately extended to Runge Kutta methods, and the overall scheme can be formulated in the mutirate GARK framework.

For a traditional Runge Kutta method the dense output provides highly accurate approximations of the solution at intermediate points
\begin{equation}
\label{eqn:dense-y}
y(t_{n}+\theta h)  \approx  
y_n 
+ H \, \sum_{j=1}^{s} b_{j}(\theta) f\left(Y_j\right)\,, \quad 0 \le \theta \le 1\,,
\end{equation}
or highly accurate approximations of the function values at intermediate points
\begin{equation}
\label{eqn:dense-f}
f\bigl( y(t_{n}+\theta h) \bigr) \approx  
\, \sum_{j=1}^{s} d_{j}(\theta) f\left(Y_j\right)\,, \quad 0 \le \theta \le 1\,.
\end{equation}

The slow terms  in the micro-steps \eqref{eqn:GARK-MR-fast-stage}
%
%
can be viewed as approximations of the function value at the micro steps
\[
 H f^{\{\s\}}\left( y(t_{n}+(\lambda-1+c_i^{\{\f,\f\}})h )\right) \approx
 H \, \sum_{j=1}^{s^{\{\s\}}} a_{i,j}^{\{\f,\s,\lambda\}} f^{\{\s\}}\left(Y_j^{\{\s\}}\right)\,.
\]
Consequently, using dense output of function values \eqref{eqn:dense-f} leads to the standard multirate GARK approach with the coupling given by the 
dense output coefficients
 \[
 a_{i,j}^{\{\f,\s,\lambda\}} = d_j\left(\frac{\lambda-1+c_i^{\{\f,\s,\lambda\}}}{M}\right) \,.
 \]
%
Alternatively, one can use the dense solution values \eqref{eqn:dense-y} in the micro-steps \eqref{eqn:GARK-MR-fast-stage}
\begin{eqnarray}
\label{eqn:microstep-dense-y}
Y_i^{\{\f,\lambda\}} & = & \widetilde{y}_{n+\left(\lambda-1\right)/M} + H \, f^{\{\s\}}\left(Y_i^{\{\s,\lambda\}}\right) + \\ 
\nonumber
& & +  h \, \sum_{j=1}^{s^{\{\f\}}} a_{i,j}^{\{\f,\f\}} f^{\{\f\}}\left(Y_j^{\{\f,\lambda\}}\right),~~ i=1,\dots,     s^{\{\f\}}
\end{eqnarray}
where
\begin{eqnarray*}
Y_i^{\{\s,\lambda\}} & = & 
y_n 
+ h \, \sum_{\lambda=1}^M \sum_{i=1}^{s^{\{\f\}}} b_{i}^{\{\f\}}(\lambda) f^{\{\f\}}\left(Y_i^{\{\f,\lambda\}}\right) 
+ H \, \sum_{i=1}^{s^{\{\s\}}} b_{i}^{\{\s\}}(\lambda) f^{\{\s\}}\left(Y_i^{\{\s\}}\right)\,.
\end{eqnarray*}
The dense output of the fast variable can be applied only for the current micro-step
\begin{eqnarray*}
Y_i^{\{\s,\lambda\}} & = & 
y_n 
+ H \sum_{i=1}^{s^{\{\f\}}} b_{i}^{\{\f\}}(\lambda) f^{\{\f\}}\left(Y_i^{\{\f,\lambda\}}\right) 
+ H \, \sum_{i=1}^{s^{\{\s\}}} b_{i}^{\{\s\}}(\lambda) f^{\{\s\}}\left(Y_i^{\{\s\}}\right)\,, 
\end{eqnarray*}
or for the previous micro-step, i.e., in extrapolation mode
\begin{eqnarray*}
Y_i^{\{\s,\lambda\}} & = & 
y_n 
+ H \sum_{i=1}^{s^{\{\f\}}} b_{i}^{\{\f\}}(\lambda) f^{\{\f\}}\left(Y_i^{\{\f,\lambda-1\}}\right) 
+ H \, \sum_{i=1}^{s^{\{\s\}}} b_{i}^{\{\s\}}(\lambda) f^{\{\s\}}\left(Y_i^{\{\s\}}\right)\,,
\end{eqnarray*}
where the dense output coefficients $b^{\{\s\}}(\lambda)$, $b^{\{\f\}}(\lambda)$ are appropriately redefined.

The solution interpolation approach \eqref{eqn:microstep-dense-y} can be cast in the GARK framework by
adding the additional slow stage values $Y_i^{\{\s,\lambda\}}$, with no contribution to the output ($b_i^{\{\s,\lambda\}}=0$). 
This is less convenient for analysis, however, as the number of slow stages becomes equal to the number of fast stages.


\subsection{Multirate infinitesimal step methods}\label{sec:MIS}

Multirate infinitesimal step (MIS) methods \cite{Wensch_2009_atmospheric} discretize the slow component with an explicit Runge Kutta method. The fast component is advanced between consecutive stages of this method
as the exact solution of a fast ODE system.
The fast ODE has a right hand side composed of the original fast component of the function, plus a piecewise constant ``tendency''  term representing the discretized slow component of the function. The order conditions of the overall method assume that the fast ODE can be solved exactly, which justifies the ``infinitesimal step'' name.
We show here that a multirate infinitesimal step method can be cast in the GARK framework when the inner fast ODEs are solved by a  Runge Kutta method with small steps.

We focus on the particular method of Knoth and Wolke \cite{Knoth_1998_MRimex}, which was the first MIS approach, and which has the
best practical potential. This approach has been named 
recursive flux splitting multirate (RFSMR) in \cite{Schlegel_2010_MR-imex,Schlegel_2011_MRimplementation,Schlegel_2012_multiscale},
and has been cast as a traditional partitioned Runge Kutta method in \cite{Schlegel_2012_multiscale}. 
Applications to the solution of atmospheric flows are discussed in \cite{Knoth_2010_MRimex,Schlegel_2009_MR-advection,Schlegel_2010_MR-imex,Schlegel_2011_MRimplementation}.
The approach below can be applied to any MIS scheme where the internal ODEs are solved by Runge Kutta methods.

Consider an outer (slow) explicit Runge Kutta scheme with the abscissae $c^\mathsc{o}_1=0$, $c^\mathsc{o}_i < c^\mathsc{o}_j$ for $i<j$, and $c^\mathsc{o}_s<1$.
The inner (fast) scheme can be explicit or implicit. If the same explicit scheme is used in both the inner and the outer loops then the method can be applied in a telescopic fashion,
where an even faster method is obtained by sub-stepping recursively. 

The scheme proceeds, in principle, by solving an ODE between each pair of consecutive stages of the slow explicit method:
\begin{equation}
\label{eqn:RFSMR-one-step}
\renewcommand{\arraystretch}{1.25}
\begin{array}{rcl}
\multicolumn{3}{l}{Y^{\{\s\}}_1 = y_n}  \\
\multicolumn{3}{l}{\textnormal{for } i=2,\dots,s^\mathsc{o}} \\
\quad v_i' &=& \sum_{j=1}^{i-1} \frac{a^\mathsc{o}_{i,j} - a^\mathsc{o}_{i-1,j}}{c^\mathsc{o}_{i} - c^\mathsc{o}_{i-1}} f^{\{\s\}} \left( Y^{\{\s\}}_j \right) + f^{\{\f\}} \left( v_i \right) \\
&& \textnormal{for  } \tau \in [0, (c^\mathsc{o}_{i} - c^\mathsc{o}_{i-1})\, H], \quad \textnormal{with } v_i(0) = Y^{\{\s\}}_{i-1} \\
\quad Y^{\{\s\}}_i &=& v_i\left( \tilde{c}^\mathsc{o}_i H \right) \\
\multicolumn{3}{l}{\textnormal{end for }i}  \\
\multicolumn{3}{l}{v' = \sum_{j=1}^{s^\mathsc{o}} \frac{b^\mathsc{o}_{j} - a^\mathsc{o}_{s^\mathsc{o},j}}{1 - c^\mathsc{o}_{s^\mathsc{o}}} f^{\{\s\}} \left( Y^{\{\s\}}_j \right) + f^{\{\f\}} \left( v \right)} \\
\multicolumn{3}{l}{\qquad  \textnormal{for  } \tau \in  [0, (1 - c^\mathsc{o}_{s^\mathsc{o}}) H], \quad \textnormal{with } v_i(0) = Y^{\{\s\}}_{s}}  \\
\multicolumn{3}{l}{y_{n+1} = v\left( 1 - c^\mathsc{o}_{s^\mathsc{o}} \right)}
\end{array}
\renewcommand{\arraystretch}{1.0}
\end{equation}
After a rescaling of the time variable $\theta = \tau/(c^\mathsc{o}_{i} - c^\mathsc{o}_{i-1})$ the internal ODEs read
\begin{subequations}
\label{eqn:RFSMR-internal-ode}
\begin{eqnarray}
\label{eqn:RFSMR-internal-ode-stage}
\quad v_i' &=& \sum_{j=1}^{i-1} \left( a^\mathsc{o}_{i,j} - a^\mathsc{o}_{i-1,j} \right)\, f^{\{\s\}}\left( Y^{\{\s\}}_j \right) +
\left(  c^\mathsc{o}_{i} - c^\mathsc{o}_{i-1} \right) \, f^{\{\f\}} \left( v_i \right) \,, \\
\nonumber
&&  \textnormal{for  } \theta \in  [0,  H], \quad \textnormal{with } v_i(0) = Y^{\{\s\}}_{i-1} \\
\nonumber
&& Y^{\{\s\}}_{i} = v_i(H) \\
\label{eqn:RFSMR-internal-ode-solution}
v' &=& \sum_{j=1}^{s^\mathsc{o}} \left(b^\mathsc{o}_{j} - a^\mathsc{o}_{s^\mathsc{o},j}\right)\, f^{\{\s\}} \left( Y^{\{\s\}}_j \right) + (1 - c^\mathsc{o}_{s^\mathsc{o}})\, f^{\{\f\}} \left( v \right) \\
\nonumber
&& \textnormal{for  } \theta \in  [0,  H], \quad \textnormal{with } v(0) = Y^{\{\s\}}_{s} \\
\nonumber
&& y_{n+1} = v(H)\,.
\end{eqnarray}
\end{subequations}

The numerical scheme solves the inner ODEs \eqref{eqn:RFSMR-internal-ode} using several steps of an inner Runge Kutta method \cite{Schlegel_2010_MR-imex}. For the present analysis 
we consider the case when only one step of the internal Runge Kutta method $(A^\mathsc{i},b^\mathsc{i})$ is taken to solve the ODE \eqref{eqn:RFSMR-internal-ode-stage} for each subinterval $i=2,\dots,s^\mathsc{o}$. This is no restriction of generality as any sequence of $M$ sub steps can be written as a single step method. The resulting scheme reads
\begin{subequations}
\label{eqn:RFSMR-one-internal-step}
\begin{eqnarray}
Y_{i,k}^{\{\f\}} & = & Y_{i-1}^{\{\s\}} + H\, c_k^\mathsc{i}\, \sum_{j=1}^{i-1} \left( a^\mathsc{o}_{i,j} - a^\mathsc{o}_{i-1,j} \right)\, f^{\{\s\}} \left( Y_j^{\{\s\}} \right) \\
\nonumber
&& + H\, \left(  c^\mathsc{o}_{i} - c^\mathsc{o}_{i-1} \right)\, \sum_{j=1}^{s^\mathsc{i}} a^\mathsc{i}_{k,j} \, f^{\{\f\}} \left( Y_{i,j}^{\{\f\}} \right)\,, \quad k=1,\dots,s^\mathsc{i}, \\ 
 Y_{i}^{\{\s\}} &=& Y_{i-1}^{\{\s\}} +  H\, \sum_{j=1}^{i-1} \left( a^\mathsc{o}_{i,j} - a^\mathsc{o}_{i-1,j} \right)\, f^{\{\s\}} \left( Y_j^{\{\s\}} \right) \\
\nonumber
&& + H\, \left(  c^\mathsc{o}_{i} - c^\mathsc{o}_{i-1} \right)\, \sum_{j=1}^{s^\mathsc{i}} b_{j}^\mathsc{i}\, f^{\{\f\}} \left( Y_{i,j}^{\{\f\}} \right).
\end{eqnarray}
\end{subequations}
The effective step of the inner method is $H\, \left(  c^\mathsc{o}_{i} - c^\mathsc{o}_{i-1} \right)$, which gives the multirate aspect of the method. If one performs $M$ sub steps then the effective step is $h\, \left(  c^\mathsc{o}_{i} - c^\mathsc{o}_{i-1} \right)$ with $h=H/M$.

Iterating after the explicit outer stages yields
\begin{eqnarray*}
Y_{i}^{\{\s\}} &=& y_n +  H\, \sum_{\ell=2}^{i} \, \sum_{j=1}^{\ell-1} \left( a^\mathsc{o}_{\ell,j} - a^\mathsc{o}_{\ell-1,j} \right)\, f^{\{\s\}} \left( Y_j^{\{\s\}} \right) 
\\
\nonumber
&& 
+ H\,  \sum_{\ell=2}^{i} \,\left(  c^\mathsc{o}_{\ell} - c^\mathsc{o}_{\ell-1} \right)\, \sum_{j=1}^{s^\mathsc{i}} b_{j}^\mathsc{i}\, f^{\{\f\}} \left( Y_{\ell,j}^{\{\f\}} \right)  \\
&=& y_n +  H \, \sum_{j=1}^{i-1} \, \sum_{\ell=j+1}^{i} \left( a^\mathsc{o}_{\ell,j} - a^\mathsc{o}_{\ell-1,j} \right)\, f^{\{\s\}} \left( Y_j^{\{\s\}} \right) 
\\
\nonumber
&& 
+ H\,  \sum_{\ell=2}^{i} \,\left(  c^\mathsc{o}_{\ell} - c^\mathsc{o}_{\ell-1} \right)\, \sum_{j=1}^{s^\mathsc{i}} b_{j}^\mathsc{i}\, f^{\{\f\}} \left( Y_{\ell,j}^{\{\f\}} \right) \\
&=& y_n +  H \, \sum_{j=1}^{i-1} \, a^\mathsc{o}_{i,j} \, f^{\{\s\}} \left( Y_j^{\{\s\}} \right)
+ H\,  \sum_{\ell=2}^{i} \,\left(  c^\mathsc{o}_{\ell} - c^\mathsc{o}_{\ell-1} \right)\, \sum_{j=1}^{s^\mathsc{i}} b_{j}^\mathsc{i}\, f^{\{\f\}} \left( Y_{\ell,j}^{\{\f\}} \right)\,.
\end{eqnarray*}
Equation \eqref{eqn:RFSMR-one-internal-step} becomes
%
\begin{eqnarray*}
\nonumber
Y_{i,k}^{\{\f\}} & = & y_n +  H\, \sum_{j=1}^{i-2}  a^\mathsc{o}_{i-1,j} \, f^{\{\s\}} \left( Y_j^{\{\s\}} \right) 
+ H\, c_k^\mathsc{i}\, \sum_{j=1}^{i-1} \left( a^\mathsc{o}_{i,j} - a^\mathsc{o}_{i-1,j} \right)\, f^{\{\s\}} \left( Y_j^{\{\s\}} \right) \\
&& + H\,  \sum_{\ell=2}^{i-1} \,\sum_{j=1}^{s^\mathsc{i}} b_{j}^\mathsc{i}\,\left(  c^\mathsc{o}_{\ell} - c^\mathsc{o}_{\ell-1} \right)\,  f^{\{\f\}} \left( Y_{\ell,j}^{\{\f\}}  \right)  \,,  \\ 
\nonumber
&& + H\,  \sum_{j=1}^{s^\mathsc{i}} \, \left(  c^\mathsc{o}_{i} - c^\mathsc{o}_{i-1} \right)\,  a_{k,j}^\mathsc{i}\, f^{\{\f\}} \left( Y_{\ell,j}^{\{\f\}}  \right) \, ,\quad k=1,\dots,s^\mathsc{i},  \\
 Y_{i}^{\{\s\}} &=& y_n +  H\, \sum_{j=1}^{i-1}  a^\mathsc{o}_{i,j} \, f^{\{\s\}} \left( Y_j^{\{\s\}} \right) 
+ H\,  \sum_{\ell=2}^{i} \, \sum_{j=1}^{s^\mathsc{i}} \, \left(  c^\mathsc{o}_{\ell} - c^\mathsc{o}_{\ell-1} \right)\,  b_{j}^\mathsc{i}\, f^{\{\f\}} \left( Y_{\ell,j}^{\{\f\}}  \right)  \,.
\end{eqnarray*}
The idea is now to interpret this scheme as a GARK method -- note that we formally solve the ODEs for the active part with one step of the inner fast method $(A^\mathsc{i},b^\mathsc{i})$ with $c^\mathsc{i} = A^\mathsc{i}\one$.
\begin{theorem}[The MIS scheme is a particular instance of a GARK method]
\begin{romannum}
\item
The MIS scheme~\eqref{eqn:RFSMR-one-step}--\eqref{eqn:RFSMR-one-internal-step}
can be written as a GARK method with the corresponding Butcher tableau \eqref{eqn:mrRK-butcher} given by $\mathbf{A}^{\{\s,\s\}}=A^\mathsc{o}$, $\mathbf{b}^{\{\s\}}=b^\mathsc{o}$,
\begin{eqnarray*}
\mathbf{A}^{\{\f,\f\}}  &=& 
\begin{bmatrix}
c_2^\mathsc{o}\, A^\mathsc{i}      &          0                   & \cdots & 0 \\
c_2^\mathsc{o}\,  \mathbf{1} b^\mathsc{i}\,^T & \left(c_3^\mathsc{o}-c_{2}^\mathsc{o}\right)\, A^\mathsc{i}        & \cdots & 0 \\
\vdots                     &                             & \ddots &   & \\
c_2^\mathsc{o}\,  \mathbf{1} b^\mathsc{i}\,^T & \left(c_3^\mathsc{o}-c_{2}^\mathsc{o}\right)\, \mathbf{1} b^\mathsc{i}\,^T   & \ldots & \left(1-c_{s^\mathsc{o}}^\mathsc{o}\right)\, A^\mathsc{i} 
\end{bmatrix} \in \Re^{s^\mathsc{o}s^\mathsc{i} \times s^\mathsc{o}s^\mathsc{i}} \,, \\
\mathbf{b}^{\{\f\}}\,^T & = &
\begin{bmatrix}
c_2^\mathsc{o}\,  b^\mathsc{i}\,^T & \left(c_3^\mathsc{o}-c_{2}^\mathsc{o}\right)\, b^\mathsc{i}\,^T & \ldots & \left(1-c_{s^\mathsc{o}}^\mathsc{o}\right) \, b^\mathsc{i}\,^T \end{bmatrix}
\in \Re^{s^\mathsc{o}s^\mathsc{i}} ,
\end{eqnarray*}
\begin{eqnarray*}
{\mathbf{A}}^{\{\f,\s\}} & = &
\begin{bmatrix}
 c^\mathsc{i}\,  \mathbf{e}_{2}^T\, A^\mathsc{o} \\
\vdots \\
\one\, \mathbf{e}_{i-1}^T\, A^\mathsc{o}  + c^\mathsc{i}\,  \left( \mathbf{e}_{i}^T-\mathbf{e}_{i-1}^T \right)\, A^\mathsc{o} \\
\vdots \\
\one\, \mathbf{e}_{s^\mathsc{o}}^T\, A^\mathsc{o}  + c^\mathsc{i}\,  \left( b^\mathsc{o}\,^T-\mathbf{e}_{s^\mathsc{o}}^T\, A^\mathsc{o}  \right)
\end{bmatrix} \in \Re^{s^\mathsc{o}s^\mathsc{i} \times s^\mathsc{o}} , 
\end{eqnarray*}
\begin{eqnarray*}
{\mathbf{A}}^{\{\s,\f\}} & = &
\begin{bmatrix}
c_2^\mathsc{o}\, \mathbf{g}_2 b^\mathsc{i}\,^T & 
\ldots &  \left(c_{s^\mathsc{o}}^\mathsc{o}-c_{s^\mathsc{o}-1}^\mathsc{o}\right)\, \mathbf{g}_{s^\mathsc{o}} b^\mathsc{i}\,^T & \mathbf{0} \end{bmatrix}  \in \Re^{s^\mathsc{o} \times s^\mathsc{o}s^\mathsc{i}} 
\end{eqnarray*}
where
\[
\mathbf{e}_i = \begin{bmatrix} 0 \\ \vdots \\ 1 \\ \vdots \\ 0 \end{bmatrix} \in \Re^{s^\mathsc{o}}\,, \quad
\mathbf{g}_i = \begin{bmatrix} 0 \\ \vdots \\ 1 \\ \vdots \\ 1 \end{bmatrix} \in \Re^{s^\mathsc{o}}
\]
\item
The coefficients fulfill the
 simplifying ``internal consistency'' conditions \eqref{eqn:simplifying-condition-general} given  in matrix form by
\[
\mathbf{c}^{\{\s,\s\}} = \mathbf{c}^{\{\s,\f\}} = \mathbf{c}^{\{\s\}} = c^\mathsc{o}\,,
\]
and
\[
\mathbf{c}^{\{\f,\s\}} = \mathbf{c}^{\{\f,\f\}} = \mathbf{c}^{\{\f\}} = 
\begin{bmatrix}
 \left( c^\mathsc{o}_{2} \right)\,c^\mathsc{i}  \\
\vdots \\
c_{i-1}^\mathsc{o}  \, \one +  \left( c^\mathsc{o}_{i}-c^\mathsc{o}_{i-1} \right)\,c^\mathsc{i} \\
\vdots \\
c_{s^\mathsc{o}}^\mathsc{o}  \, \one +  \left( b^\mathsc{o}\,^T c^\mathsc{o} -c^\mathsc{o}_{s^\mathsc{o}}  \right)\,c^\mathsc{i}
\end{bmatrix} \in \Re^{s^\mathsc{o}s^\mathsc{i}}. 
\]
\item
Assuming that both the fast and the slow methods have order at least two,
the simplifying assumptions imply that the overall scheme is second order.
\item
Assuming that both the fast and the slow methods have order at least three,
the third order coupling conditions reduce to the single condition
\begin{eqnarray}
\label{rfsmr.order3.cond}
\frac{1}{3} &=& \sum_{i=2}^{s^\mathsc{o}} \left(c_i^\mathsc{o}-c_{i-1}^\mathsc{o}\right)\, \left(   \mathbf{e}_{i}+\mathbf{e}_{i-1} \right)^T \, A^\mathsc{o} c^\mathsc{o} + \left(1-c_{s^\mathsc{o}}^\mathsc{o}\right) \,  \left( \frac{1}{2} + \mathbf{e}_{s^\mathsc{o}}^T\, A^\mathsc{o} c^\mathsc{o}   \right).
\end{eqnarray}
\end{romannum}
\end{theorem}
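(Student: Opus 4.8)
The statement has four parts (i)--(iv); I would treat (i)--(ii) as direct bookkeeping, (iii) as an immediate corollary of the GARK order theory, and (iv) as the one genuine computation.

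\textbf{Part (i): identifying the Butcher tableau.} I would start from the iterated form of \eqref{eqn:RFSMR-one-internal-step} derived just above the theorem, in which $Y_i^{\{\s\}}$ and $Y_{i,k}^{\{\f\}}$ are expressed purely in terms of $y_n$, the slow increments $f^{\{\s\}}(Y_j^{\{\s\}})$, and the fast increments $f^{\{\f\}}(Y_{\ell,j}^{\{\f\}})$. Matching coefficients against the GARK definition \eqref{eqn:delta-stage2}--\eqref{eqn:delta-sol2} with the fast stage vector indexed by the pair $(\ell,j)$, $\ell=2,\dots,s^\mathsc{o}$, $j=1,\dots,s^\mathsc{i}$, reads off the blocks. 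The slow--slow block is $A^\mathsc{o}$ and $b^{\{\s\}}=b^\mathsc{o}$ by construction. The fast--fast block is block lower-triangular: within subinterval $\ell$ the inner method contributes $(c_\ell^\mathsc{o}-c_{\ell-1}^\mathsc{o})A^\mathsc{i}$ on the diagonal, and a completed subinterval contributes $(c_\ell^\mathsc{o}-c_{\ell-1}^\mathsc{o})\,\one\, b^\mathsc{i}\,^T$ below, which is exactly the stated $\mathbf{A}^{\{\f,\f\}}$; the weights $\mathbf{b}^{\{\f\}}$ collect the $(c_\ell^\mathsc{o}-c_{\ell-1}^\mathsc{o})\,b^\mathsc{i}\,^T$ telescopically with the final $(1-c_{s^\mathsc{o}}^\mathsc{o})$ piece. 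For $\mathbf{A}^{\{\f,\s\}}$ I would isolate the coefficient of $H f^{\{\s\}}(Y_j^{\{\s\}})$ in the $Y_{i,k}^{\{\f\}}$ equation: the ``already completed'' part gives $\mathbf{e}_{i-1}^T A^\mathsc{o}$ (constant in $k$, hence a $\one$ multiplying it) and the current-stage contribution $c_k^\mathsc{i}(a^\mathsc{o}_{i,j}-a^\mathsc{o}_{i-1,j})$ gives $c^\mathsc{i}(\mathbf{e}_i^T-\mathbf{e}_{i-1}^T)A^\mathsc{o}$, with the last internal ODE replacing $\mathbf{e}_{s^\mathsc{o}}^T A^\mathsc{o}$ by $b^\mathsc{o}\,^T$; this is the stated block. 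For $\mathbf{A}^{\{\s,\f\}}$, the coefficient of $H f^{\{\f\}}(Y_{\ell,j}^{\{\f\}})$ in $Y_i^{\{\s\}}$ is $(c_\ell^\mathsc{o}-c_{\ell-1}^\mathsc{o})\,b^\mathsc{i}_j$ whenever $\ell\le i$ and $0$ otherwise, which is exactly $\mathbf{g}_\ell b^\mathsc{i}\,^T$ scaled by $(c_\ell^\mathsc{o}-c_{\ell-1}^\mathsc{o})$ in column block $\ell$. I would double-check dimensions ($s^\mathsc{o}s^\mathsc{i}$ fast stages) and that $\mathbf{b}^{\{\s\}}\,^T\one=b^\mathsc{o}\,^T\one$ and $\mathbf{b}^{\{\f\}}\,^T\one=\sum_{\ell}(c_\ell^\mathsc{o}-c_{\ell-1}^\mathsc{o})=1$ (telescoping, using $c_1^\mathsc{o}=0$).

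\textbf{Part (ii): internal consistency.} Here I would simply compute the row sums. For the slow rows, $\mathbf{A}^{\{\s,\s\}}\one=A^\mathsc{o}\one=c^\mathsc{o}$ and $\mathbf{A}^{\{\s,\f\}}\one=\sum_{\ell}(c_\ell^\mathsc{o}-c_{\ell-1}^\mathsc{o})\,\mathbf{g}_\ell\,(b^\mathsc{i}\,^T\one)$; using $b^\mathsc{i}\,^T\one=1$ and that $\sum_{\ell\le i}(c_\ell^\mathsc{o}-c_{\ell-1}^\mathsc{o})=c_i^\mathsc{o}$ (telescoping) gives $c^\mathsc{o}$ again. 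For the fast rows in subinterval $i$, $\mathbf{A}^{\{\f,\f\}}$ row sum is $c_{i-1}^\mathsc{o}\one+(c_i^\mathsc{o}-c_{i-1}^\mathsc{o})A^\mathsc{i}\one=c_{i-1}^\mathsc{o}\one+(c_i^\mathsc{o}-c_{i-1}^\mathsc{o})c^\mathsc{i}$, and $\mathbf{A}^{\{\f,\s\}}$ row sum is $\one\,\mathbf{e}_{i-1}^T A^\mathsc{o}\one+c^\mathsc{i}(\mathbf{e}_i^T-\mathbf{e}_{i-1}^T)A^\mathsc{o}\one=c_{i-1}^\mathsc{o}\one+(c_i^\mathsc{o}-c_{i-1}^\mathsc{o})c^\mathsc{i}$ as well, since $\mathbf{e}_j^T A^\mathsc{o}\one=c_j^\mathsc{o}$; the last block uses $b^\mathsc{o}\,^T\one=1$. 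This establishes \eqref{eqn:simplifying-condition-general} and the displayed formula for $\mathbf{c}^{\{\f\}}$.

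\textbf{Parts (iii)--(iv): order.} Part (iii) is immediate: once \eqref{eqn:simplifying-condition-general} holds and both base methods have order $\ge 2$, the general GARK analysis cited in the text (all order-two conditions and most order-three conditions automatically satisfied) gives second order. For part (iv), with base order $\ge 3$ the only surviving order-three conditions are the pair \eqref{remaining.order3-general}, namely $(\mathbf{b}^{\{\s\}})^T\mathbf{A}^{\{\s,\f\}}\mathbf{c}^{\{\f\}}=1/6$ and $(\mathbf{b}^{\{\f\}})^T\mathbf{A}^{\{\f,\s\}}\mathbf{c}^{\{\s\}}=1/6$. I would show the first is automatically satisfied by the MIS structure: expanding $(b^\mathsc{o})^T\mathbf{A}^{\{\s,\f\}}\mathbf{c}^{\{\f\}}$ using $\mathbf{c}^{\{\f\}}=c^\mathsc{o}_{\ell-1}\one+(c^\mathsc{o}_\ell-c^\mathsc{o}_{\ell-1})c^\mathsc{i}$ on block $\ell$, together with $b^\mathsc{i}\,^T\one=1$, $b^\mathsc{i}\,^Tc^\mathsc{i}=1/2$, and telescoping of $(b^\mathsc{o})^T\mathbf{g}_\ell$ against the differences, should collapse to $(b^\mathsc{o})^T A^\mathsc{o} c^\mathsc{o}=1/6$, which holds because the slow method has order three. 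The second condition, after the same substitutions ($\mathbf{c}^{\{\s\}}=c^\mathsc{o}$, the block structure of $\mathbf{A}^{\{\f,\s\}}$, $b^\mathsc{i}\,^T\one=1$, $b^\mathsc{i}\,^Tc^\mathsc{i}=1/2$, $\mathbf{b}^{\{\f\}}$ block weights $(c^\mathsc{o}_\ell-c^\mathsc{o}_{\ell-1})$), is precisely \eqref{rfsmr.order3.cond}; I would carry out this reduction carefully, tracking the two terms ($\one\,\mathbf{e}_{i-1}^TA^\mathsc{o}$ vs.\ $c^\mathsc{i}(\mathbf{e}_i^T-\mathbf{e}_{i-1}^T)A^\mathsc{o}$) and the special last block where $\mathbf{e}_{s^\mathsc{o}}^TA^\mathsc{o}$ is replaced by $b^\mathsc{o}\,^T$, producing the $1/2+\mathbf{e}_{s^\mathsc{o}}^TA^\mathsc{o}c^\mathsc{o}$ summand.

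\textbf{Main obstacle.} The bookkeeping in part (i)---correctly matching the double-indexed fast stages and keeping the telescoping sums straight through the ``completed subinterval'' versus ``current subinterval'' split---is the most error-prone step; once the tableau is pinned down, parts (ii)--(iv) are routine substitutions driven by $b^\mathsc{i}\,^T\one=1$, $b^\mathsc{i}\,^Tc^\mathsc{i}=1/2$, the telescoping identity $\sum_{\ell\le i}(c^\mathsc{o}_\ell-c^\mathsc{o}_{\ell-1})=c^\mathsc{o}_i$, and the order-three conditions of the base methods. The genuinely nontrivial algebraic identity is the reduction in (iv) yielding \eqref{rfsmr.order3.cond}, where the asymmetric treatment of the final block must be handled with care.
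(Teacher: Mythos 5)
Your plan follows essentially the same route as the paper's proof: read the tableau blocks off the unfolded form of \eqref{eqn:RFSMR-one-internal-step} (including the final subinterval where $\mathbf{e}_{s^\mathsc{o}}^T A^\mathsc{o}$ is replaced by $b^\mathsc{o}\,^T$), verify internal consistency by row sums, invoke the GARK order theory for part (iii), and reduce the two surviving order-three coupling conditions, with the slow one automatic and the fast one yielding \eqref{rfsmr.order3.cond}. One small correction in part (iv): the quantity $\left(b^\mathsc{o}\right)^T \mathbf{A}^{\{\s,\f\}}\, \mathbf{c}^{\{\f\}}$ does not collapse to $\left(b^\mathsc{o}\right)^T A^\mathsc{o} c^\mathsc{o}$; the telescoping gives $\mathbf{A}^{\{\s,\f\}}\,\mathbf{c}^{\{\f\}} = \tfrac{1}{2}\,(c^\mathsc{o})^2$ componentwise, so the condition reads $\tfrac{1}{2}\left(b^\mathsc{o}\right)^T \mathrm{diag}(c^\mathsc{o})\, c^\mathsc{o} = \tfrac{1}{6}$ and is satisfied via the order-three condition $b^T \mathrm{diag}(c)\, c = \tfrac{1}{3}$ rather than $b^T A c = \tfrac{1}{6}$; the conclusion is unaffected since both hold for an order-three base scheme.
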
 
\begin{proof}
 Comparing with~\eqref{eqn:GARK}, the results above show that:
\[
a_{i,j}^{\{\s,\s\}} = a_{i,j}^\mathsc{o}\,,\quad j=1,\dots,i-1\,,
\]
\[
a_{i,(\ell,m)}^{\{\s,\f\}} = b_{m}^\mathsc{i}\, \left(  c^\mathsc{o}_{\ell} - c^\mathsc{o}_{\ell-1} \right)\,,   \quad \ell=2,\dots,i\,, \quad m=1,\dots,s^\mathsc{i}\,,
\]
\[
a_{(i,k),j}^{\{\f,\s\}} = a^\mathsc{o}_{i-1,j}  + c_k^\mathsc{i}\,  \left( a^\mathsc{o}_{i,j} - a^\mathsc{o}_{i-1,j} \right)\,, \quad j=1,\dots,i-1\,, \quad k=1,\dots,s^\mathsc{i}\,,
\]
and
\[
a_{(i,k),(\ell,m)}^{\{\f,\f\}} = 
\left\{ \begin{array}{ll}
b_{m}^\mathsc{i}\,\left(  c^\mathsc{o}_{\ell} - c^\mathsc{o}_{\ell-1} \right)\,, &  \ell=2,\dots,i-1\,, \\
a_{k,m}^\mathsc{i}\,\left(  c^\mathsc{o}_{i} - c^\mathsc{o}_{i-1} \right)\,, & \ell=i\,,
\end{array}\right. \,, \quad m,k=1,\dots,s^\mathsc{i}\,.
\]
The double subscript indices correspond to the double indices 
of the fast stage vectors. 

We see that the method satisfies the simplifying ``internal consistency'' conditions for $i=1,\ldots,s^\mathsc{o}$ and $k=1,\ldots,s^\mathsc{i}$, since
\begin{eqnarray*}
c_{i}^{\{\s,\s\}} & = & c_i^\mathsc{o}\,, \\
c_{i}^{\{\s,\f\}} & = & \sum_{m=1}^{s^\mathsc{i}} b_{m}^\mathsc{i}\, \sum_{\ell=2}^{i} \left(  c^\mathsc{o}_{\ell} - c^\mathsc{o}_{\ell-1} \right) = c_i^\mathsc{o}\,, \\
%
c_{(i,k)}^{\{\f,\s\}} & = & \sum_{ j=1}^{i-1} a^\mathsc{o}_{i-1,j}  + c_k^\mathsc{i}\,  \sum_{ j=1}^{i-1}\left( a^\mathsc{o}_{i,j} - a^\mathsc{o}_{i-1,j} \right) = c^\mathsc{o}_{i-1} + c_k^\mathsc{i}\, \left( c^\mathsc{o}_{i} - c^\mathsc{o}_{i-1} \right)\,,\\
%
c_{(i,k)}^{\{\f,\f\}} &=& \sum_{m=1}^{s^\mathsc{i}} b_{m}^\mathsc{i}\, \sum_{\ell=2}^{i-1} \left(  c^\mathsc{o}_{\ell} - c^\mathsc{o}_{\ell-1} \right) + \sum_{m=1}^{s^\mathsc{i}} a_{k,m}^\mathsc{i}\,\left(  c^\mathsc{o}_{i} - c^\mathsc{o}_{i-1} \right) = 
c^\mathsc{o}_{i-1} +  c_{k}^\mathsc{i}\,\left(  c^\mathsc{o}_{i} - c^\mathsc{o}_{i-1} \right)\,.
\end{eqnarray*}
One internal step is taken to solve the last ODE \eqref{eqn:RFSMR-internal-ode-solution}:
\begin{subequations}
\label{eqn:RFSMR-one-step-solution}
\begin{eqnarray}
Y_{s^\mathsc{o}+1,k}^{\{\f\}} & = & Y_{s^\mathsc{o}}^{\{\s\}} + H\, c_k^\mathsc{i}\, \sum_{j=1}^{s^\mathsc{o}} \left( b^\mathsc{o}_{j} - a^\mathsc{o}_{s^\mathsc{o},j} \right)\, f^{\{\s\}} \left( Y_j^{\{\s\}} \right) \\
\nonumber
&& + H\, \left(  1 - c^\mathsc{o}_{s^\mathsc{o}} \right)\, \sum_{j=1}^{s^\mathsc{i}} a^\mathsc{i}_{k,j} \, f^{\{\f\}} \left( Y_{s^\mathsc{o}+1,j}^{\{\f\}} \right)\,, \quad k=1,\dots,s^\mathsc{i}  \\ 
y_{n+1} &=& Y_{s^\mathsc{o}}^{\{\s\}} +  H\, \sum_{j=1}^{s^\mathsc{o}} \left( b^\mathsc{o}_{j} - a^\mathsc{o}_{s^\mathsc{o},j} \right)\, f^{\{\s\}} \left( Y_j^{\{\s\}} \right) \\
\nonumber
&& + H\, \left(   1 - c^\mathsc{o}_{s^\mathsc{o}} \right)\, \sum_{j=1}^{s^\mathsc{i}} b_{j}^\mathsc{i}\, f^{\{\f\}} \left( Y_{s^\mathsc{o}+1,j}^{\{\f\}} \right) 
\end{eqnarray}
\end{subequations}
Using
\begin{eqnarray*}
 Y_{s^\mathsc{o}}^{\{\s\}} &=& y_n +  H\, \sum_{j=1}^{s^\mathsc{o}-1}  a^\mathsc{o}_{s^\mathsc{o},j} \, f^{\{\s\}} \left( Y_j^{\{\s\}} \right) 
+ H\,  \sum_{\ell=2}^{s^\mathsc{o}} \, \sum_{j=1}^{s^\mathsc{i}} \, \left(  c^\mathsc{o}_{\ell} - c^\mathsc{o}_{\ell-1} \right)\,  b_{j}^\mathsc{i}\, f^{\{\f\}} \left( Y_{\ell,j}^{\{\f\}}  \right)  \,
\end{eqnarray*}
equation  \eqref{eqn:RFSMR-one-step-solution} becomes
\begin{subequations}
\label{eqn:RFSMR-one-step-solution-standard}
\begin{eqnarray}
Y_{s^\mathsc{o}+1,k}^{\{\f\}} & = & y_n +  H\,  \sum_{j=1}^{s^\mathsc{o}} \left( c_k^\mathsc{i}\,b^\mathsc{o}_{j} + (1- c_k^\mathsc{i})\,a^\mathsc{o}_{s^\mathsc{o},j} \right)\, f^{\{\s\}} \left( Y_j^{\{\s\}} \right) \\
\nonumber
&& + H\,  \sum_{\ell=2}^{s^\mathsc{o}} \, \sum_{j=1}^{s^\mathsc{i}} \, \left(  c^\mathsc{o}_{\ell} - c^\mathsc{o}_{\ell-1} \right)\,  b_{j}^\mathsc{i}\, f^{\{\f\}} \left( Y_{\ell,j}^{\{\f\}}  \right) \\
\nonumber
&& + H\, \left(  1 - c^\mathsc{o}_{s^\mathsc{o}} \right)\, \sum_{j=1}^{s^\mathsc{i}} a^\mathsc{i}_{k,j} \, f^{\{\f\}} \left( Y_{s^\mathsc{o}+1,j}^{\{\f\}} \right)\,, \quad k=1,\dots,s^\mathsc{i}  \\ 
y_{n+1} &=& y_n +    H\, \sum_{j=1}^{s^\mathsc{o}}  b^\mathsc{o}_{j} \, f^{\{\s\}} \left( Y_j^{\{\s\}} \right) \\
\nonumber
&& + H\,  \sum_{\ell=2}^{s^\mathsc{o}} \, \sum_{j=1}^{s^\mathsc{i}} \, \left(  c^\mathsc{o}_{\ell} - c^\mathsc{o}_{\ell-1} \right)\,  b_{j}^\mathsc{i}\, f^{\{\f\}} \left( Y_{\ell,j}^{\{\f\}}  \right) \\
\nonumber
&& + H\, \left(   1 - c^\mathsc{o}_{s^\mathsc{o}} \right)\, \sum_{j=1}^{s^\mathsc{i}} b_{j}^\mathsc{i}\, f^{\{\f\}} \left( Y_{s^\mathsc{o}+1,j}^{\{\f\}} \right) 
\end{eqnarray}
\end{subequations}

The coefficients are
\[
b_{i}^{\{\s\}} = b_i^\mathsc{o}\,, \quad i=1,\dots,s^\mathsc{o}\,,
\]
\[
b_{(\ell,m)}^{\{\f\}} = 
\left\{ \begin{array}{ll}
b_{m}^\mathsc{i}\,\left(  c^\mathsc{o}_{\ell} - c^\mathsc{o}_{\ell-1} \right)\,, &  \ell=2,\dots,s^\mathsc{o}\,, \\
b_{m}^\mathsc{i}\,\left(  1 - c^\mathsc{o}_{s^\mathsc{o}} \right)\,, & \ell=s^\mathsc{o}+1\,,
\end{array}\right. \,, \quad m=1,\dots,s^\mathsc{i}\,.
\]
\[
a_{(s^\mathsc{o}+1,k),j}^{\{\f,\s\}} = a^\mathsc{o}_{s^\mathsc{o},j}  + c_k^\mathsc{i}\,  \left( b^\mathsc{o}_{j} - a^\mathsc{o}_{s^\mathsc{o},j} \right)\,, \quad j=1,\dots,s^\mathsc{o}\,, \quad k=1,\dots,s^\mathsc{i}\,,
\]
and
\[
a_{(s^\mathsc{o}+1,k),(\ell,m)}^{\{\f,\f\}} = 
\left\{ \begin{array}{ll}
b_{m}^\mathsc{i}\,\left(  c^\mathsc{o}_{\ell} - c^\mathsc{o}_{\ell-1} \right)\,, &  \ell=2,\dots,s^\mathsc{o}\,, \\
a_{k,m}^\mathsc{i}\,\left(  1 - c^\mathsc{o}_{s^\mathsc{o}} \right)\,, & \ell=s^\mathsc{o}\,,
\end{array}\right. \,, \quad m,k=1,\dots,s^\mathsc{i}\,.
\]

We have that the first order conditions as well as the last simplifying conditions hold
\begin{eqnarray*}
\sum_{\ell=2}^{s^\mathsc{o}} \sum_{m=1}^{s^\mathsc{i}} b_{(\ell,m)}^{\{\f\}} &=& \sum_{\ell=2}^{s^\mathsc{o}} \left( c^\mathsc{o}_{\ell} - c^\mathsc{o}_{\ell-1}\right) + 1 - c^\mathsc{o}_{s^\mathsc{o}} =1\,, \\
c_{(s^\mathsc{o}+1,k)}^{\{\f,\s\}} &=&  \sum_{j=1}^{s^\mathsc{o}} a^\mathsc{o}_{s^\mathsc{o},j}  + c_k^\mathsc{i}\,  \sum_{j=1}^{s^\mathsc{o}} \left( b^\mathsc{o}_{j} - a^\mathsc{o}_{s^\mathsc{o},j} \right) = c^\mathsc{o}_{s^\mathsc{o}} + c_k^\mathsc{i}\, \left(1 - c^\mathsc{o}_{s^\mathsc{o}} \right)\, \\
c_{(s^\mathsc{o}+1,k)}^{\{\f,\f\}} &=& \sum_{\ell=2}^{s^\mathsc{o}} \left(  c^\mathsc{o}_{\ell} - c^\mathsc{o}_{\ell-1} \right) + c_{k}^\mathsc{i}\,\left(  1 - c^\mathsc{o}_{s^\mathsc{o}} \right) = c^\mathsc{o}_{s^\mathsc{o}} + c_k^\mathsc{i}\, \left(1 - c^\mathsc{o}_{s^\mathsc{o}} \right)\,.
\end{eqnarray*}
Due to the GARK properties \cite{SaGu13a} the simplifying conditions imply that the overall scheme is second order,
assuming that both the fast and the slow methods have order at least two.
This proves parts (ii) and (iii).

Collecting all the coefficients together we have that the slow method is the outer method, and therefore has order $p$:
\begin{eqnarray*}
a_{i,j}^{\{\s,\s\}} &=& a_{i,j}^\mathsc{o}\,, \quad i=1,\dots,s^\mathsc{o}\,,\quad j=1,\dots,i-1\,, \\
b_{i}^{\{\s\}} &=& b_i^\mathsc{o}\,, \quad i=1,\dots,s^\mathsc{o}\,.
\end{eqnarray*}

The inner scheme takes $s^\mathsc{o}$ consecutive steps of the fast method ($i = 2,\dots,s^\mathsc{o}+1$) each with a step size $ c^\mathsc{o}_{i} - c^\mathsc{o}_{i-1}$,
and therefore also has order $p$:
\begin{eqnarray*}
a_{(i,k),(\ell,m)}^{\{\f,\f\}} &=& 
\left\{ \begin{array}{ll}
b_{m}^\mathsc{i}\,\left(  c^\mathsc{o}_{\ell} - c^\mathsc{o}_{\ell-1} \right)\,, &   i=2,\dots, s^\mathsc{o}+1\,, ~~ \ell=2,\dots,i-1\,, \\
a_{k,m}^\mathsc{i}\,\left(  c^\mathsc{o}_{i} - c^\mathsc{o}_{i-1} \right)\,, & i=2,\dots, s^\mathsc{o}\,, ~~ \ell=i\,, \\
a_{k,m}^\mathsc{i}\,\left(  1 - c^\mathsc{o}_{s^\mathsc{o}} \right)\,, & i=s^\mathsc{o}+1\,, ~~ \ell=s^\mathsc{o}\,,
\end{array}\right.  \\
&& \quad m,k=1,\dots,s^\mathsc{i}\\
b_{(\ell,m)}^{\{\f\}} &=& 
\left\{ \begin{array}{ll}
b_{m}^\mathsc{i}\,\left(  c^\mathsc{o}_{\ell} - c^\mathsc{o}_{\ell-1} \right)\,, &  \ell=2,\dots,s^\mathsc{o}\,, \\
b_{m}^\mathsc{i}\,\left(  1 - c^\mathsc{o}_{s^\mathsc{o}} \right)\,, & \ell=s^\mathsc{o}+1\,,
\end{array}\right. \,, \quad m=1,\dots,s^\mathsc{i}\,.
\end{eqnarray*}

The coupling coefficients are
\begin{eqnarray*}
a_{i,(\ell,m)}^{\{\s,\f\}} &=& b_{m}^\mathsc{i}\, \left(  c^\mathsc{o}_{\ell} - c^\mathsc{o}_{\ell-1} \right)\,,   \\
&& \quad i=2,\dots,s^\mathsc{o}\,, ~~ \ell=2,\dots,i\,, ~~ m=1,\dots,s^\mathsc{i}\,,
\end{eqnarray*}
and
\begin{eqnarray*}
a_{(i,k),j}^{\{\f,\s\}} &=& 
\left\{ \begin{array}{ll}
a^\mathsc{o}_{i-1,j}  + c_k^\mathsc{i}\,  \left( a^\mathsc{o}_{i,j} - a^\mathsc{o}_{i-1,j} \right)\,, \quad i=2,\dots,s^\mathsc{o}\,, ~~ j=1,\dots,i-1\,,  \\
a^\mathsc{o}_{s^\mathsc{o},j}  + c_k^\mathsc{i}\,  \left( b^\mathsc{o}_{j} - a^\mathsc{o}_{s^\mathsc{o},j} \right)\,, \quad i=s^\mathsc{o}+1\,, ~~  j=1,\dots,s^\mathsc{o}\,
\end{array}\right. \\
&& \quad k=1,\dots,s^\mathsc{i}\,,
\end{eqnarray*}
which proves part (i).

To verify part (iv), we see that
the third order coupling conditions reduce to
\begin{eqnarray*}
\left(\mathbf{b}^{\{\s\}}\right)^T \cdot \mathbf{A}^{\{\s,\f\}}  \cdot \mathbf{c}^{\{\f\}}   & = & \frac{1}{6}\,, \\ 
\mathbf{b}^{\{\f\}}\,^T \cdot \mathbf{A}^{\{\f,\s\}}  \cdot \mathbf{c}^{\{\s\}}   & = & \frac{1}{6}\,, 
\end{eqnarray*}
assuming that both the fast and the slow methods have order at least three.
The first condition is satisfied automatically since
\begin{eqnarray*}
\mathbf{A}^{\{\s,\f\}}  \cdot \mathbf{c}^{\{\f\}}   & = & \sum_{i=2}^{s^\mathsc{o}}  \left( \left(c_i^\mathsc{o}-c_{i-1}^\mathsc{o}\right)\, \mathbf{g}_i b^\mathsc{i}\,^T \right) \cdot \left( c_{i-1}^\mathsc{o}  \, \one +  \left( c^\mathsc{o}_{i}-c^\mathsc{o}_{i-1} \right)\,c^\mathsc{i} \right) \\
& = & \sum_{i=2}^{s^\mathsc{o}}  c_{i-1}^\mathsc{o}\, \left(c_i^\mathsc{o}-c_{i-1}^\mathsc{o}\right)\, \mathbf{g}_i    +  \frac{1}{2} \left( c^\mathsc{o}_{i}-c^\mathsc{o}_{i-1} \right)^2\, \mathbf{g}_i  \\
& = & \sum_{i=2}^{s^\mathsc{o}}  \frac{1}{2} \left( (c^\mathsc{o}_{i})^2-(c^\mathsc{o}_{i-1})^2 \right)\, \mathbf{g}_i  \\
&=& \frac{1}{2} \, (c^\mathsc{o})^2 \\
\left(\mathbf{b}^{\{\s\}}\right)^T \cdot \mathbf{A}^{\{\s,\f\}}  \cdot \mathbf{c}^{\{\f\}}   & = &  \frac{1}{2} (b^\mathsc{o})^T (c^\mathsc{o})^2 = \frac{1}{6}\,,
\end{eqnarray*}
where the square is taken component-wise.

For the second condition
%
%
%
%
\begin{eqnarray*}
\mathbf{b}^{\{\f\}}\,^T \cdot \mathbf{A}^{\{\f,\s\}}  \cdot \mathbf{c}^{\{\s\}}  
& = & 
\begin{bmatrix}
c_2^\mathsc{o}\,  b^\mathsc{i}\,^T & \left(c_3^\mathsc{o}-c_{2}^\mathsc{o}\right)\, b^\mathsc{i}\,^T & \ldots & \left(1-c_{s^\mathsc{o}}^\mathsc{o}\right) \, b^\mathsc{i}\,^T \end{bmatrix} \cdot \\
& & \cdot 
\begin{bmatrix}
\one\, \mathbf{e}_{1}^T\, A^\mathsc{o} c^\mathsc{o} + c^\mathsc{i}\,  \left( \mathbf{e}_{2}^T-\mathbf{e}_{1}^T \right)\, A^\mathsc{o} c^\mathsc{o} \\
\vdots \\
\one\, \mathbf{e}_{i-1}^T\, A^\mathsc{o} c^\mathsc{o}  + c^\mathsc{i}\,  \left( \mathbf{e}_{i}^T-\mathbf{e}_{i-1}^T \right)\, A^\mathsc{o} c^\mathsc{o} \\
\vdots \\
\one\, \mathbf{e}_{s^\mathsc{o}}^T\, A^\mathsc{o} c^\mathsc{o}  + c^\mathsc{i}\,  \left(\frac{1}{2} -\mathbf{e}_{s^\mathsc{o}}^T\, A^\mathsc{o} c^\mathsc{o}  \right)
\end{bmatrix} \\
& = &
\sum_{i=2}^{s^\mathsc{o}} \left(c_i^\mathsc{o}-c_{i-1}^\mathsc{o}\right)\, b^\mathsc{i}\,^T \left( \one\, \mathbf{e}_{i-1}^T  + c^\mathsc{i}\,  \left( \mathbf{e}_{i}^T-\mathbf{e}_{i-1}^T \right) \right) \, A^\mathsc{o} c^\mathsc{o} \\
&& + \left(1-c_{s^\mathsc{o}}^\mathsc{o}\right) \, b^\mathsc{i}\,^T \left( \one\, \mathbf{e}_{s^\mathsc{o}}^T\, A^\mathsc{o} c^\mathsc{o}  + c^\mathsc{i}\,  \left(\frac{1}{2} -\mathbf{e}_{s^\mathsc{o}}^T\, A^\mathsc{o} c^\mathsc{o}  \right) \right) \\
& = &
\frac{1}{2}\, \sum_{i=2}^{s^\mathsc{o}} \left(c_i^\mathsc{o}-c_{i-1}^\mathsc{o}\right)\, \left(   \mathbf{e}_{i}^T+\mathbf{e}_{i-1}^T \right) \, A^\mathsc{o} c^\mathsc{o} \\
&& + \left(1-c_{s^\mathsc{o}}^\mathsc{o}\right) \,  \left( \frac{1}{4} + \frac{1}{2}\, \mathbf{e}_{s^\mathsc{o}}^T\, A^\mathsc{o} c^\mathsc{o}   \right),
\end{eqnarray*}
which gives ~\eqref{rfsmr.order3.cond}. \qquad
\end{proof}
\begin{remark}
The condition~\eqref{rfsmr.order3.cond} corresponds to the additional order three condition derived in the original MIS paper~\cite{Knoth_1998_MRimex}.
\end{remark}
%
%

\if 0
First we set $A^{\{\f,\s\}}=A^{\{\f,\f\}}$ to fulfill condition~\eqref{meth2.c}. 
To fulfill \eqref{meth2.b} we demand 
$$
\frac{1}{M} \sum_{\lambda=0}^{M-1} b^{\{\f\}}\,^T F(\lambda) \right) c^{\s} & = & \frac{M}{6} - b^{\{\f\}}\,^T A^{\{\f,\f\}}  c^{\s},
$$
which yields (using the abbreviation $d:=M \left( \frac{M}{6} -  b^{\{\f\}}\,^T A^{\{\f,\f\}}  c^{\s} \right)$)
$$\eta_1(\lambda) = \frac{d + c_{S,3} \eta_2 - c_{S,2} \eta_2 - c_{S,3} \lambda}{c_{S,1} - c_{S,3}}, \quad
\eta_s(\lambda)=\lambda - \eta_2 - \eta_3,  
$$
with free parameter $\eta_2(\lambda)$, 
provided that $c_{S,1}- c_{S,3} \neq 0$, which fulfills
in addition \eqref{meth2.e}. The remaining conditions~\eqref{meth2.a} and \eqref{meth2.d} are fulfilled by setting
$$
A^{\{\s,\f\}} = A^{\{\s,\s\}} + \begin{bmatrix}
0 & 0 & 0 \\
0 & 0 & 0 \\
\frac{M-1}{6 b^{\{\s\}}_3(c_{A,1}-c_{A,3})} & 0 & 
- \frac{M-1}{6 b^{\{\s\}}_3(c_{A,1}-c_{A,3})} 
\end{bmatrix},
$$
provided that $c_{A,1}- c_{A,3} \neq 0$.

We are now interested in constructing a stable scheme of order 3 with only 3 stages. First we set $A^{\{\f,\s\}}=A^{\{\f,\f\}}$ to fulfill condition~\eqref{meth2.c}. 
To fulfill \eqref{meth2.b} we demand 
$$
\frac{1}{M} \sum_{\lambda=0}^{M-1} b^{\{\f\}}\,^T F(\lambda) \right) c^{\s} & = & \frac{M}{6} - b^{\{\f\}}\,^T A^{\{\f,\f\}}  c^{\s},
$$
which yields (using the abbreviation $d:=M \left( \frac{M}{6} -  b^{\{\f\}}\,^T A^{\{\f,\f\}}  c^{\s} \right)$)
$$\eta_1(\lambda) = \frac{d + c_{S,3} \eta_2 - c_{S,2} \eta_2 - c_{S,3} \lambda}{c_{S,1} - c_{S,3}}, \quad
\eta_s(\lambda)=\lambda - \eta_2 - \eta_3, 
$$
with free parameter $\eta_2(\lambda)$, 
provided that $c_{S,1}- c_{S,3} \neq 0$, which fulfills
in addition \eqref{meth2.e}. The remaining conditions~\eqref{meth2.a} and \eqref{meth2.d} are fulfilled by setting
$$
A^{\{\s,\f\}} = A^{\{\s,\s\}} + \begin{bmatrix}
0 & 0 & 0 \\
0 & 0 & 0 \\
\frac{M-1}{6 b^{\{\s\}}_3(c_{A,1}-c_{A,3})} & 0 & 
- \frac{M-1}{6 b^{\{\s\}}_3(c_{A,1}-c_{A,3})} 
\end{bmatrix},
$$
provided that $c_{A,1}- c_{A,3} \neq 0$.

\if 0
\begin{subequations}
\begin{eqnarray}
\sum_{i=1}^s b^{\{\s\}}_i & = & 1 \\
\sum_{i,j=1}^s b^{\{\s\}}_i a^{\{\s,\s\}}_{i,j} & = & \frac{1}{2} \\
\sum_{i,j=1}^s b^{\{\s\}}_i a^{\{\s,\f\}}_{i,j} & = & \frac{M}{2} 
\end{eqnarray}
\end{subequations}
for the latent part,
and
\begin{subequations}
\begin{eqnarray}
\sum_{i=1}^s b^{\{\f\}}_i & = & 1 \\
\sum_{i,j=1}^s b^{\{\f\}}_i a^{\{\f,\f\}}_{i,j} & = & \frac{1}{2} \\
\sum_{i,j=1}^s b^{\{\f\}}_i a^{\{\f,\s\}}_{i,j} & = & \frac{1}{2} 
\end{eqnarray}
\end{subequations}
for the active part.
\fi
\fi

\section{Multirate (traditional) additive Runge-Kutta methods}\label{sec:mark}

A special case of multirate GARK schemes~\eqref{MGARK}
are the multirate additive Runge-Kutta schemes. They are obtained in the GARK framework by setting
\begin{eqnarray*}
&&A^{\{\s\}}:=A^{\{\s,\s\}}=A^{\{\f,\s,1\}}, \\
&&A^{\{\f\}}:=A^{\{\f,\f\}}=A^{\{\s,\f,1\}}, \\
&&A^{\{\s,\f,\lambda\}}:=0  \quad  \textnormal{and} \quad
A^{\{\f,\s,\lambda\}}:=A^{\{\s,\lambda\}} \quad \textnormal{for}\quad \lambda=2,\ldots,M.
\end{eqnarray*}
The scheme proceeds as follows
\begin{subequations}
\label{multaddRKscheme}
\begin{eqnarray}
Y_i^{\lambda} & = & y_n + h \sum_{l=1}^{\lambda-1} \sum_{j=1}^s b_{j}^{\{\f\}} f^{\{\f\}}(Y_j^{l}) +  H \, \sum_{j=1}^s a_{i,j}^{\{\s,\lambda\}} f^{\{\s\}}(Y_j^{1}) \\ \nonumber 
& &  + h \, \sum_{j=1}^s a_{i,j}^{\{\f\}} f^{\{\f\}}(Y_j^{\lambda}), \quad \lambda=1,\ldots,M,\\
y_{n+1} & = & 
y_n + h \, \sum_{\lambda=1}^M \sum_{j=1}^s b_{i}^{\{\f\}} f^{\{\f\}}(Y_i^{\lambda}) + H \, \sum_{j=1}^s b_{i}^{\{\s\}} f^{\{\s\}}(Y_i^{1}),
\end{eqnarray}
\end{subequations}
and has the following extended Butcher tableau
\renewcommand{\arraystretch}{1.5}
\[
\begin{array}{cccc|cccc}
\frac{1}{M} A^{\{\f\}} & 0 & \cdots & 0 
& A^{\{\s\}} & 0 & \cdots & 0 \\
\frac{1}{M} \one b^{\{\f\}}\,^T & \frac{1}{M} A^{\{\f\}} & \cdots & 0 
& A^{\{\s,2\}} & 0 & \cdots & 0 \\
\vdots & & \ddots & &
\vdots & & & \vdots \\
\frac{1}{M} \one b^{\{\f\}}\,^T & \frac{1}{M} \one b^{\{\f\}}\,^T & \cdots & 
\frac{1}{M} A^{\{\f\}}
& A^{\{\s,M\}} & 0 & \cdots & 0 \\ \hline
\frac{1}{M} b^{\{\f\}}\,^T & \frac{1}{M} b^{\{\f\}}\,^T & \cdots & \frac{1}{M} b^{\{\f\}}\,^T
& b^{\{\s\}}\,^T & 0 & \cdots & 0
\end{array}
\]
\renewcommand{\arraystretch}{1.00}
\subsection{Additive partitioning}
The coupling condition \eqref{eqn:aas-for-stability} for nonlinear stability yields 
\begin{eqnarray*}
 A^{\{\s\}} & = & \one b^{\{\s\}}\,^T - B^{\{\f\}}\,^{-1} (A^{\{\f\}})^T B^{\{\s\}}, \\
A^{\{\s,\lambda\}} & = & \one b^{\{\s\}}\,^T\,, \qquad \lambda=2,\ldots,M,
\end{eqnarray*}
and only $b^{\{\s\}},b^{\{\f\}}$ and $A^{\{\f\}}$ remain as free parameters.
As a consequence, the algebraic stability of the basic method 
$(b^{\{\s\}},A^{\{\s\}})$ is equivalent to the algebraic stability of $(b^{\{\s\}},D)$, where $D=B^{\{\f\}}\,^{-1} (A^{\{\f\}})^T B^{\{\s\}} $. 
\begin{example}[A nonlinearly stable additive Runge-Kutta scheme of order two]
Besides the algebraic stability of $(b^{\{\s\}},A^{\{\s\}})$ and $(b^{\{\s\}},D)$,
the following conditions have to be fulfilled for a method of order two:
\renewcommand{\arraystretch}{1.5}
\begin{eqnarray*}
b^{\{\f\}}\,^T \one & = & 1, \\
b^{\{\f\}}\,^T A^{\{\f\}} \one & = & \frac{1}{2}, \\
b^{\{\s\}}\,^T \one & = & 1, \\
b^{\{\s\}}\,^T D \one & = & \frac{1}{2}, \\
b^{\{\s\}}\,^T A^{\{\f\}} \one & = &\frac{M}{2}.
\end{eqnarray*}
\renewcommand{\arraystretch}{1.0}
A simple choice of parameters is
\renewcommand{\arraystretch}{1.5}
\begin{eqnarray*} 
b^{\{\f\}} & = & \begin{bmatrix} \frac{1}{2} \\ \frac{1}{2} \end{bmatrix}, \quad
b^{\{\s\}}  =  \begin{bmatrix} \frac{3}{4M+2} \\ \frac{4M-1}{4M+2} \end{bmatrix}, \quad
A^{\{\f\}}  =  \begin{bmatrix} \frac{1}{4} & - \frac{M}{2} \\
\frac{M+1}{2} & \frac{1}{4} \end{bmatrix},
\end{eqnarray*}
\renewcommand{\arraystretch}{1.0}
and
\renewcommand{\arraystretch}{1.5}
\begin{eqnarray*}
A^{\{\s\}} & = & \frac{1}{4M+2}  \begin{bmatrix} \frac{3}{2} & - M(4M-1) \\
3(M+1) & \frac{4M-1}{2} \end{bmatrix}
\end{eqnarray*}
\renewcommand{\arraystretch}{1.0}
In this case both base methods are not only algebraically stable but also symplectic.
\end{example}

\subsection{Componentwise partitioning}
For component partitioning there are no additional nonlinear stability conditions. 
Following again the lines of~\cite{Kvaerno_1999_MR-RK}, we set 
\begin{subequations}
\label{component-special-conditions}
\begin{equation}
A^{\{\s,\lambda+1\}}=A^{\{\s\}} + F(\lambda)\quad \textnormal{with} \quad F(\lambda) \one = \lambda
 \one, \quad \lambda=0,\ldots,M-1.
\end{equation}
We also consider the simplifying assumption 
\begin{equation}
c:= A^{\{\f\}} \one =  A^{\{\s\}} \one\,.
\end{equation}
\end{subequations}
The corresponding order conditions are given in Table~\ref{table_multaddRK_2a}.

\begin{table}
\[
\renewcommand{\arraystretch}{1.5}
\begin{array}{ccrclrcl}
\mbox{No.} & \mbox{Order} & \multicolumn{3}{c}{\mbox{order cond. (slow part)}}& \multicolumn{3}{c}{\mbox{order cond. (fast part)}} \\ \hline
i) & 1 & b^{\{\s\}}\,^T \one & = & 1 & b^{\{\f\}}\,^T \one & = & 1\\ \hline
ii) & 2 & b^{\{\s\}}\,^T c & = & \frac{1}{2} & b^{\{\f\}}\,^T c & = & \frac{1}{2}\\ 
iv) & 3 & b^{\{\s\}}\,^T \diag(c) c & =&  \frac{1}{3} & b^{\{\f\}}\,^T \diag(c) c & =&  \frac{1}{3}\\
vii) &  & b^{\{\s\}}\,^T A^{\{\s\}} c & = & \frac{1}{6} & b^{\{\f\}}\,^T A^{\{\f\}} c & = & \frac{1}{6}\\
ix) &  & b^{\{\s\}}\,^T A^{\{\f\}} c & = & \frac{M}{6} & b^{\{\f\}}\,^T \left(A^{\{\s\}} + \frac{1}{M} \sum_{\lambda=0}^{M-1} F(\lambda) \right) c & = & \frac{M}{6} \\
\end{array}
\renewcommand{\arraystretch}{1.0}
\]
\caption{\label{table_multaddRK_2a} Order conditions for multirate additive Runge Kutta scheme~\eqref{multaddRKscheme} when \eqref{component-special-conditions} hold.}
\end{table}


If, in addition to \eqref{component-special-conditions}, 
a second simplifying condition is given by the following relation
\begin{eqnarray}
\label{component-special-conditionb}
b^{\{\f\}}\,^T \sum_{\lambda=0}^{M-1} F(\lambda) \, c & = & 0. 
\end{eqnarray}
When \eqref{component-special-conditions} and \eqref{component-special-conditionb} hold
any pair $(b^{\{\f\}}, A^{\{\f\}})$ and $(b^{\{\s\}}, A^{\{\s\}})$ of algebraically stable order three schemes lead to an order three multirate scheme, provided that the  compatibility conditions are true:
\begin{eqnarray*}
b^{\{\s\}}\,^T A^{\{\f\}} \,c & = & \frac{M}{6}, \quad
b^{\{\f\}}\,^T A^{\{\s\}} \,c  =  \frac{M}{6}.
\end{eqnarray*}
These compatibility conditions are fulfilled by a scheme with $s$ stages if
\[
\eta_1(\lambda) = \frac{c_{s} \sum_{j=2}^{s-1} \eta_j - \sum_{j=2}^{s-1} c_{j} \eta_j - c_{s} \lambda}{c_{1} - c_{s}}, \quad
\eta_s(\lambda)=\lambda - \sum_{j=1}^{s-1} \eta_j,
\]
with free parameters $\eta_2(\lambda),\ldots,\eta_{s-1}(\lambda)$, 
provided that $c_1 \neq c_s$.

It is easy to see that the scheme must have at least four stages, as $s=3$ would yield $b^{\{\f\}}=b^{\{\s\}}$ and consequently $M=1$.

\begin{example}[An algebraically stable additive Runge-Kutta scheme of order three]
To construct an algebraically stable scheme of order $p=3$, we first choose a pair of algebraically stable schemes $(A,b^{\{\f\}})$ and $(A,b^{\{\s\}})$ with $c:=A \one$ and then define 
$A^{\{\f\}}:=A+(M-1) \widetilde{A}^{\{\f\}}$ and $A^{\{\s\}}:=A+(M-1) \widetilde{A}^{\{\s\}}$.
It is straightforward to show that this yields a stable multirate additive Runge-Kutta scheme, if the following conditions hold:
\begin{subequations}
\begin{eqnarray}
\widetilde{A}^{\{\f\}} \one & = & \widetilde{A}^{\{\s\}} \one = 0, \\
b^{\{\f\}} \widetilde{A}^{\{\f\}} c & = & b^{\{\s\}} \widetilde{A}^{\{\s\}} c = 0, \\
b^{\{\s\}} \widetilde{A}^{\{\f\}} c & = & b^{\{\f\}} \widetilde{A}^{\{\s\}} c = \frac{1}{6}, \\
\widetilde{A}^{\{\f\}}\,^T B^{\{\f\}} + B^{\{\f\}} A^{\{\f\}} & = &
\widetilde{A}^{\{\s\}}\,^T B^{\{\s\}} + B^{\{\s\}} A^{\{\s\}} = 0.
\end{eqnarray} 
\end{subequations}
Such a pair can be constructed by extending (doubling) any algebraically stable scheme. For the RADAU-IA method, for example, the extension to the pair $(A,b^{\{\f\}})$ and $(A,b^{\{\s\}})$ is given by:
\renewcommand{\arraystretch}{1.25}
\[
\begin{array}{c|rrrrr}
0 & \frac{1}{4} & -\frac{1}{4} & 0 & 0 \\
\frac{2}{3} & \frac{1}{4} & \frac{5}{12} & 0 & 0 \\
0 & 0 & 0 & \frac{1}{4} & -\frac{1}{4} \\
\frac{2}{3} & 0 & 0 &  \frac{1}{4} & \frac{5}{12} \\ \hline 
b^{\{\f\}} & \frac{1}{4} & \frac{3}{4} & 0 & 0 \\
b^{\{\s\}} & 0 & 0 & \frac{1}{4} & \frac{3}{4}
\end{array}\,.
\]
A possible choice for $\widetilde{A}^{\{\f\}}$ and $\widetilde{A}^{\{\s\}}$ fulfilling all conditions above is
\[
\widetilde{A}^{\{\f\}} = \begin{bmatrix}
0 & 0  & ~~0 & 0 \\
0 & 0  &~~ 0 & 0 \\
0 & 0 & ~~0 & 0 \\
0 & 0 & -\frac{1}{3} & \frac{1}{3}
\end{bmatrix}, \qquad
\widetilde{A}^{\{\s\}} = \begin{bmatrix}
~~0 & 0 & 0 & 0 \\
-\frac{1}{3} & \frac{1}{3} & 0 & 0 \\
~~0 & 0 & 0 & 0 \\
~~0 & 0 & 0 & 0
\end{bmatrix}.
\]
Finally, we set 
\[
\eta_1(\lambda) = \frac{c_{4} \sum_{j=2}^{3} \eta_j - \sum_{j=2}^{3} c_{j} \eta_j - c_{4} \lambda}{c_{1} - c_{4}}, \quad
\eta_4(\lambda)=\lambda - \sum_{j=1}^{3} \eta_i,
\]
with $\eta_2$ and $\eta_3$ arbitrary. For $\eta_2:=\eta_3:=0$ we get
$\eta_1=\frac{c_4}{c_4-c_1} \lambda$ and
$\eta_4=-\frac{c_1}{c_4-c_1} \lambda$.
For the extended RADAU-IA scheme we have $\eta_1=\lambda, \eta_2=\eta_3=\eta_4=0$.
\end{example}

\section{Monotonicity properties}\label{sec:monotonicity}

Consider the method \eqref{MGARK} in the general form, represented by the Butcher tableau \eqref{eqn:mrRK-butcher}
and let
%
%
\[
\renewcommand{\arraystretch}{1.5}
\widetilde{\mathbf{A}} = \begin{bmatrix} 
\frac{1}{M} A^{\{\f,\f\}}                       & \cdots & 0 &  A^{\{\f,\s,1\}}  & 0 \\
\vdots                               & \ddots &   & \vdots & 0  \\
\frac{1}{M} \mathbf{1} b^{\{\f\}}\,^T   & \ldots & \frac{1}{M} A^{\{\f,\f\}} &  A^{\{\f,\s,M\}}& 0  \\
\frac{1}{M} A^{\{\s,\f,1\}}  & \cdots & \frac{1}{M} A^{\{\s,\f,M\}} & A^{\{\s,\s\}} & 0   \\   
\frac{1}{M} b^{\{\f\}}\,^T  & \ldots & \frac{1}{M} b^{\{\f\}}\,^T & b^{\{\s\}}\,^T  & 0 
 \end{bmatrix}\,.
\renewcommand{\arraystretch}{1.0}
\]

We are concerned with partitioned systems \eqref{eqn:additive-ode} where there exist $\rho > 0$ such that 
for any $y$
\begin{equation} 
\label{eqn:monotone-ode}
\left\Vert y + \rho\, f^{\{\s\}} (y) \right\Vert \le \left\Vert y  \right\Vert\,, \quad
\left\Vert y + \frac{\rho}{M}\, f^{\{\f\}} (y) \right\Vert \le \left\Vert y  \right\Vert \,.
\end{equation}
This implies that condition \eqref{eqn:monotone-ode} holds for any $0 \le \tau \le \rho$, i.e., the solutions of forward Euler steps
with the slow and fast subsystems, respectively, are monotone
under this step size restriction. The condition \eqref{eqn:monotone-ode} also implies that
the system \eqref{eqn:additive-ode} has a solution of non increasing norm.
To see this consider $\alpha,\beta > 0$ with $\alpha+\beta=1$ and write an Euler step with the full system as a convex combination
\begin{eqnarray*}
\left\Vert y + \theta\, \left( f^{\{\s\}} (y) + f^{\{\f\}} (y)\right)  \right\Vert &=& \left\Vert \alpha\, \left(y + \frac{\theta}{\alpha}\, f^{\{\s\}} (y)\right) + 
\beta \left(y + \frac{\theta}{\beta}\, f^{\{\f\}} (y) \right) \right\Vert \\
&\le& \alpha \left\Vert y \right\Vert + \beta \left\Vert y \right\Vert  = \left\Vert y \right\Vert,
\end{eqnarray*}
if $\theta \le \min\{\alpha , \beta/M\}\,  \rho$. For $\beta \to 1$ the aggregate Euler step is monotonic for time steps $0 < \theta < \rho/M$, 
and therefore the solution of \eqref{eqn:additive-ode} has non increasing norm, $(d/dt) \left\Vert y  \right\Vert \le 0$ \cite{Higueras_2006_SSP-ARK}.

We seek multirate schemes which guarantee a monotone numerical solution \linebreak $\left\Vert y_{n+1}  \right\Vert \le \left\Vert y_{n}  \right\Vert$ for \eqref{eqn:monotone-ode} under suitable step size restrictions. Specifically, we seek schemes where the macro step is not subject to the above $\theta < \rho/M$ bound.

The following definition and results follow from the general ones for GARK schemes in ~\cite{SaGu13a}.
\begin{definition}[Absolutely monotonic multirate GARK] 
Let $r>0$ and 
\begin{equation}
\label{eqn:R-tilde}
\widetilde{\mathbf{R}}=\textnormal{diag}\left\{ M\, r\,\mathbf{I}_{M s^{\{\f\}} \times  M s^{\{\f\}}},  r\,\mathbf{I}_{s^{\{\s\}} \times  s^{\{\s\}}} , 1 \right\}\,.
\end{equation}
A multirate GARK scheme \eqref{eqn:GARK} defined by $\widetilde{\mathbf{A}} \ge 0$ is called {\em absolutely monotonic} (a.m.) at $r \in \Re$ if 
\begin{subequations}
\label{eqn:am-conditions}
\begin{eqnarray}
\label{eqn:am-condition-e}
\alpha(r) &=&  \left(\mathbf{I}_{\hat{s} \times \hat{s}} + \widetilde{\mathbf{A}}\widetilde{\mathbf{R}} \right)^{-1} \cdot  \mathbf{1}_{\hat{s} \times 1} \ge 0\,, ~~~\textnormal{and} \\
\label{eqn:am-condition-alpha}
\beta(r) &=&  \left(\mathbf{I}_{\hat{s} \times \hat{s}} + \widetilde{\mathbf{A}}\widetilde{\mathbf{R}} \right)^{-1} \cdot  \widetilde{\mathbf{A}}\widetilde{\mathbf{R}}
=  \mathbf{I}_{\hat{s} \times \hat{s}} - \left(\mathbf{I}_{\hat{s} \times \hat{s}} + \widetilde{\mathbf{A}}\widetilde{\mathbf{R}} \right)^{-1} \ge 0\,,
\end{eqnarray}
\end{subequations}
where  $\hat{s}=M s^{\{\f\}}+ s^{\{\s\}} +1$. Here all the inequalities are taken component-wise.
\end{definition}

Let 
\begin{equation}
\label{eqn:mr-Ahat}
\widehat{\mathbf{A}} =  
\widetilde{\mathbf{A}} \cdot \textnormal{diag}\left\{ M\,\mathbf{I}_{M s^{\{\f\}} \times  M s^{\{\f\}}},  \mathbf{I}_{s^{\{\s\}} \times  s^{\{\s\}}} , 1 \right\}.
\end{equation}
We note that conditions \eqref{eqn:am-conditions} are equivalent to
\begin{subequations}
\label{eqn:am-simple-conditions}
\begin{eqnarray}
\label{eqn:am-simple-condition-e}
\alpha(r) &=&  \left(\mathbf{I}_{\hat{s} \times \hat{s}} + r\,\widehat{\mathbf{A}} \right)^{-1} \cdot  \mathbf{1}_{\hat{s} \times 1} \ge 0\,, ~~~\textnormal{and} \\
\label{eqn:am-simple-condition-alpha}
\beta(r) &=&  \left(\mathbf{I}_{\hat{s} \times \hat{s}} + r\, \widehat{\mathbf{A}} \right)^{-1} \cdot  \widehat{\mathbf{A}} \ge 0\,,
\end{eqnarray}
\end{subequations}
These are precisely the monotonicity relations for a simple Runge Kutta matrix. Consequently, the machinery developed for assessing the monotonicity
of single rate Runge Kutta schemes \cite{Higueras_2004_SSP,Kraaijevanger_1991_contractivity} can be directly applied to 
the multirate GARK case as well.

\begin{definition}[Radius of absolute monotonicity] 
The radius of absolute monotonicity of the multirate GARK scheme \eqref{eqn:GARK}  is the largest number $\mathcal{R}\ge 0$
such that the scheme is absolutely monotonic \eqref{eqn:am-simple-conditions} for any 
$r \in [0,\mathcal{R}]$.
\end{definition}

\begin{theorem}[Monotonicity of solutions] 
Consider the GARK scheme  \eqref{eqn:GARK} 
applied to a partitioned system with the property \eqref{eqn:monotone-ode}. 
For any macro-step size obeying the restriction
\begin{equation}
\label{eqn:step-restriction-monotonicity}
H \le  \mathcal{R}\, \rho 
\end{equation}
the stage values and the solution are monotonic
\begin{subequations}
\label{eqn:monotonicity-conclusions}
\begin{eqnarray}
\left\Vert  Y_i^{\{q\}} \right\Vert &\le& \left\Vert y_n \right\Vert\,, \quad q=1,\dots,N, ~~i=1,\dots,s^{\{q\}}\,, ~~ q \in \{\f,\s\} \\
\ \left\Vert  y_{n+1} \right\Vert &\le& \left\Vert y_n \right\Vert\,.
\end{eqnarray}
\end{subequations}
\end{theorem}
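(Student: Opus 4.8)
The plan is to regard one macro-step of \eqref{MGARK} as a single ``super'' Runge--Kutta step whose coefficient matrix is the augmented array $\widetilde{\mathbf A}$ introduced above, and then run the classical Shu--Osher/Kraaijevanger contractivity argument, with the diagonal weighting $\widetilde{\mathbf R}$ of \eqref{eqn:R-tilde} chosen precisely so that the implied forward-Euler sub-steps land on the monotonicity thresholds $\rho$ for $f^{\{\s\}}$ and $\rho/M$ for $f^{\{\f\}}$. First I would collect all the unknowns of \eqref{MGARK} (after eliminating the $\widetilde y$'s) into one vector $\mathbf Z=[\,Y^{\{\f,1\}};\dots;Y^{\{\f,M\}};\,Y^{\{\s\}};\,y_{n+1}\,]$ of length $\hat s=M s^{\{\f\}}+s^{\{\s\}}+1$, and check against the tableau \eqref{eqn:mrRK-butcher} that $\mathbf Z=\mathbf 1\,y_n+H\,\widetilde{\mathbf A}\,\mathbf K$, where $\mathbf K$ stacks $f^{\{\f\}}$ at the fast stages, $f^{\{\s\}}$ at the slow stages, and an arbitrary value in the last slot; the last entry is irrelevant because the last column of $\widetilde{\mathbf A}$ vanishes, so I set it to $0$, which makes $\bar Z_{\hat s}=y_{n+1}$ below.

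Next, given $H$ with $0<H\le\mathcal R\,\rho$ (the case $H=0$ being trivial), put $r:=H/\rho\in[0,\mathcal R]$, so the scheme is absolutely monotonic at this $r$, i.e.\ $\widetilde{\mathbf A}\ge 0$ and $\alpha(r),\beta(r)\ge 0$ with $\widetilde{\mathbf R}$ as in \eqref{eqn:R-tilde}. Adding $\widetilde{\mathbf A}\widetilde{\mathbf R}\mathbf Z$ to both sides of $\mathbf Z=\mathbf 1\,y_n+H\widetilde{\mathbf A}\mathbf K$ gives $(\mathbf I+\widetilde{\mathbf A}\widetilde{\mathbf R})\mathbf Z=\mathbf 1\,y_n+\widetilde{\mathbf A}\bigl(\widetilde{\mathbf R}\mathbf Z+H\mathbf K\bigr)$. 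The key elementary computation is the identity $\widetilde{\mathbf R}\mathbf Z+H\mathbf K=\widetilde{\mathbf R}\,\bar{\mathbf Z}$, where $\bar{\mathbf Z}$ has fast components $Y_i^{\{\f,\lambda\}}+\tfrac{\rho}{M}f^{\{\f\}}(Y_i^{\{\f,\lambda\}})$, slow components $Y_i^{\{\s\}}+\rho\,f^{\{\s\}}(Y_i^{\{\s\}})$, and last component $y_{n+1}$: this holds because $M r\cdot\tfrac{\rho}{M}=r\rho=H$ and $r\cdot\rho=H$, which is exactly where the factor $M$ in \eqref{eqn:R-tilde} is used. The dissipativity hypothesis \eqref{eqn:monotone-ode} (together with its consequence for all step sizes $0\le\tau\le\rho$) then gives $\|\bar Z_k\|\le\|Z_k\|$ for every component $k$.

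Multiplying by $(\mathbf I+\widetilde{\mathbf A}\widetilde{\mathbf R})^{-1}$ yields the Shu--Osher form $\mathbf Z=\alpha(r)\,y_n+\beta(r)\,\bar{\mathbf Z}$, and since $\beta(r)\mathbf 1=\mathbf 1-\alpha(r)$, each row of $[\,\alpha(r)\mid\beta(r)\,]$ consists of nonnegative weights summing to $1$. Taking norms componentwise and using $\|\bar Z_k\|\le\|Z_k\|$ gives, with $v:=(\|Z_i\|)_i$, the entrywise inequality $(\mathbf I-\beta(r))\,v\le\alpha(r)\,\|y_n\|$; now $\mathbf I-\beta(r)=(\mathbf I+\widetilde{\mathbf A}\widetilde{\mathbf R})^{-1}$ has entrywise nonnegative inverse $\mathbf I+\widetilde{\mathbf A}\widetilde{\mathbf R}$ (here $\widetilde{\mathbf A}\ge 0$ and $\widetilde{\mathbf R}\ge 0$ are essential), so left-multiplying by it preserves the inequality and gives $v\le(\mathbf I+\widetilde{\mathbf A}\widetilde{\mathbf R})\,\alpha(r)\,\|y_n\|=\mathbf 1\,\|y_n\|$. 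Reading off the components yields $\|Y_i^{\{q\}}\|\le\|y_n\|$ and $\|y_{n+1}\|\le\|y_n\|$, which is \eqref{eqn:monotonicity-conclusions}.

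I expect the only real obstacle to be bookkeeping: pinning down the augmented array $\widetilde{\mathbf A}$ so that its ``nodes'' are exactly the fast and slow stages together with $y_{n+1}$, dealing with the spurious last column, and verifying the scaling identity $\widetilde{\mathbf R}\mathbf Z+H\mathbf K=\widetilde{\mathbf R}\,\bar{\mathbf Z}$ with the two different Euler thresholds $\rho$ and $\rho/M$. Once this is in place the contractivity estimate is the standard argument; alternatively, one may simply invoke the equivalent form \eqref{eqn:am-simple-conditions} and the single-rate monotonicity theory cited in the text, the partitioned thresholds being precisely what the rescaling $\widehat{\mathbf A}=\widetilde{\mathbf A}\cdot\mathrm{diag}\{M\mathbf I,\mathbf I,1\}$ absorbs.
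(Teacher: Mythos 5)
Your argument is correct and complete: the splitting of each macro-step into forward-Euler sub-steps of sizes $\rho$ (slow) and $\rho/M$ (fast) via the identity $\widetilde{\mathbf R}\mathbf Z+H\mathbf K=\widetilde{\mathbf R}\,\bar{\mathbf Z}$, followed by the convex-combination/contractivity estimate using $\alpha(r),\beta(r)\ge 0$ and the nonnegativity of $\mathbf I+\widetilde{\mathbf A}\widetilde{\mathbf R}$, is exactly the Kraaijevanger-type argument that the paper's definitions of $\widetilde{\mathbf A}$, $\widetilde{\mathbf R}$, and absolute monotonicity are set up to support. The paper itself gives no proof but defers to the general GARK monotonicity result in the cited reference; your write-up supplies that standard argument in full, correctly identifying the role of the factor $M$ in \eqref{eqn:R-tilde} as matching the two Euler thresholds in \eqref{eqn:monotone-ode}.
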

In practice we are interested in the largest upper bound for the time step that ensures monotonicity.

We next consider the particular case of telescopic multirate GARK methods,
 where both the fast and the slow components use the same 
irreducible monotonic base scheme
$(A,b)$. The classical Runge Kutta monotonicity theory \cite{Higueras_2004_SSP,Kraaijevanger_1991_contractivity} 
states that an irreducible base scheme has a nonzero radius of absolute monotonicity if and only if
\begin{equation}
\label{eqn:incidence-base-scheme}
Inc\left( \begin{bmatrix} A & 0 \\ b^T & 0 \end{bmatrix}^2 \right) \le Inc\left( \begin{bmatrix} A & 0 \\ b^T & 0 \end{bmatrix} \right)\,,
\end{equation}
where $Inc$ denotes the incidence matrix 
(i.e., a matrix with entries equal to one or zero for the non-zero and
zero entries of the original matrix, respectively). 

The Butcher tableau of the resulting multirate scheme is
\[
\renewcommand{\arraystretch}{1.5}
\begin{array}{cccc|c}
\frac{1}{M} A     &          0                   & \cdots & 0 & A^{\{\f,\s,1\}}\\
\frac{1}{M} \mathbf{1} b\,^T & \frac{1}{M} A        & \cdots & 0 & A^{\{\f,\s,2\}}  \\
\vdots                     &                             & \ddots &   & \vdots \\
\frac{1}{M} \mathbf{1} b\,^T & \frac{1}{M} \mathbf{1} b\,^T   & \ldots & \frac{1}{M} A & A^{\{\f,\s,M\}} \\
\hline
\frac{1}{M} A^{\{\s,\f,1\}}  & \frac{1}{M} A^{\{\s,\f,2\}}   & \ldots & \frac{1}{M}A^{\{\s,\f,M\}}  & A \\
\hline
\frac{1}{M} b\,^T & \frac{1}{M} b\,^T & \ldots & \frac{1}{M} b\,^T & b^T
\end{array}
\renewcommand{\arraystretch}{1.0}
\]
and the matrix \eqref{eqn:mr-Ahat} is
\begin{equation}
\renewcommand{\arraystretch}{1.5}
\widehat{\mathbf{A}} = 
\label{eqn:mr-Ahat-extended}
\left[
\begin{array}{ccccc|cc}
 A     &          0                   & \cdots & 0 & 0 & A^{\{\f,\s,1\}} & 0 \\
 \mathbf{1} b\,^T &  A        & \cdots & 0 &0 &  A^{\{\f,\s,2\}} & 0 \\
\vdots                     &                             & \ddots &  & & \vdots & \\
 \mathbf{1} b\,^T &  \mathbf{1} b\,^T   & \ldots &  A &0 &  A^{\{\f,\s,M\}} & 0 \\
 b\,^T &  b\,^T & \ldots &  b\,^T &0 &  0 & 0 \\
\hline
 A^{\{\s,\f,1\}}  &  A^{\{\s,\f,2\}}   & \ldots & A^{\{\s,\f,M\}}  &0 &  A & 0 \\
 b\,^T &  b\,^T & \ldots &  b\,^T &0 &  b^T & 0
\end{array}
\renewcommand{\arraystretch}{1.0}
\right].
\end{equation}
The extra stages added to obtain \eqref{eqn:mr-Ahat-extended} from  \eqref{eqn:mr-Ahat} do not impact the final solution, 
therefore the underlying numerical solution is not changed.
Denote the upper left block of \eqref{eqn:mr-Ahat-extended} by $A_M$.  Note that \eqref{eqn:incidence-base-scheme}
implies that $Inc(A_M^2) \le Inc(A_M)$ as $A_M$ represents $M$ concatenated steps of the base method.
 
By similar arguments as in the classical theory \cite{Higueras_2004_SSP,Kraaijevanger_1991_contractivity}, the multirate scheme is absolutely monotonic if the incidence of the extended matrix satisfies
 \begin{equation}
 \label{eqn:am-incidence-inequality}
 Inc(\widehat{\mathbf{A}}^2) \le Inc(\widehat{\mathbf{A}})\,. 
\end{equation}
\begin{theorem}[Conditions for absolutely monotonic telescopic multirate GARK schemes]
The multirate GARK schemes with the same basic scheme for fast and slow components is absolutely monotonic, if the following conditions hold:
\begin{subequations}
\label{eqn:am-incidence-conditions}
\begin{eqnarray}
\label{eqn:am-incidence-11}
Inc\left( A_M^2 + \left(  A^{\{\f,\s,i\}} A^{\{\s,\f,j\}}  \right)_{i,j=1,\dots,M}\right) &\le& Inc\left( A_M  \right) \,,   \\
\label{eqn:am-incidence-22}
Inc\left(  \begin{bmatrix} A & 0 \\ b^T & 0 \end{bmatrix}^2 + 
\begin{bmatrix}  \sum_{\lambda=1}^M  A^{\{\s,\f,\lambda\}} A^{\{\f,\s,\lambda\}} & 0 \\ \sum_{\lambda=1}^M  b^T A^{\{\f,\s,\lambda\}}  & 0 \end{bmatrix} \right) &\le& Inc\left( \begin{bmatrix} A & 0 \\ b^T & 0 \end{bmatrix} \right)\,, \\
\label{eqn:am-incidence-12}
Inc\left(\sum_{\lambda=1}^{i-1} \one  b^T A^{\{\f,\s,\lambda\}}  +   A \, A^{\{\f,\s,i\}} +  A^{\{\f,\s,i\}} \, A \right) &\le& Inc\left( A^{\{\f,\s,i\}} \right) \,,   \\
\label{eqn:am-incidence-21a}
Inc\left(\sum_{\lambda=j+1}^{M} A^{\{\s,\f,\lambda\}} \one  b^T  +  A^{\{\s,\f,j\}}\, A + A\, A^{\{\s,\f,j\}}\right) &\le&  Inc\left(A^{\{\s,\f,j\}}\right)\,, \\
\label{eqn:am-incidence-21b}
Inc\left((M-j)  b^T  +  b^T\, A + b^T\, A^{\{\s,\f,j\}}\right) &\le& Inc\left(b^T\right)\,,
\end{eqnarray}
for all $i,j=1,\dots,M$.
\end{subequations}
\end{theorem}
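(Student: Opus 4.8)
The plan is to reduce the statement to the single incidence inequality~\eqref{eqn:am-incidence-inequality}, which was already argued to imply absolute monotonicity, and then to verify that inequality block by block. First I would fix the block partition of the extended matrix in~\eqref{eqn:mr-Ahat-extended} indexed by the $M$ fast stage groups $FS_1,\dots,FS_M$, the slow stage group $SS$, and the solution pseudo-stage $SOL$. In this partition the fast--fast corner is the matrix $A_M$ representing $M$ concatenated base steps; the corner $\{SS,SOL\}\times\{SS,SOL\}$ is the extended slow tableau appearing on the right-hand side of~\eqref{eqn:am-incidence-22}; the $FS_i\times SS$ block equals $A^{\{\f,\s,i\}}$; the $SS\times FS_j$ block equals $A^{\{\s,\f,j\}}$; the $SOL\times FS_j$ block equals $b^T$; and every block in the $SOL$ column vanishes. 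Because the $SOL$ column of $\widehat{\mathbf{A}}$ is zero, so is that of $\widehat{\mathbf{A}}^2$, and nothing needs to be checked there.

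Next I would carry out the block multiplication $\widehat{\mathbf{A}}^2=\widehat{\mathbf{A}}\cdot\widehat{\mathbf{A}}$, using the block-lower-triangular structure of $A_M$ (with $\mathbf{1}\,b^T$ below the diagonal blocks, $A$ on them, and $0$ above). The five nontrivial groups of blocks of $\widehat{\mathbf{A}}^2$ come out exactly as the left-hand sides of the hypotheses. The fast--fast corner is $A_M^2$ plus the block matrix $(A^{\{\f,\s,i\}}A^{\{\s,\f,j\}})_{i,j=1,\dots,M}$ produced by the path through the slow stages, which is~\eqref{eqn:am-incidence-11}. The $\{SS,SOL\}\times SS$ corner is the extended slow tableau squared, plus the correction $\sum_\lambda A^{\{\s,\f,\lambda\}}A^{\{\f,\s,\lambda\}}$ in the $SS$ row and $\sum_\lambda b^T A^{\{\f,\s,\lambda\}}$ in the $SOL$ row, both coming from the path through the fast stages, which is~\eqref{eqn:am-incidence-22}. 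The $FS_i\times SS$ block is $\sum_{\lambda=1}^{i-1}\mathbf{1}\,b^T A^{\{\f,\s,\lambda\}} + A\,A^{\{\f,\s,i\}} + A^{\{\f,\s,i\}}\,A$, i.e.~\eqref{eqn:am-incidence-12}, the one-sided partial sum $\sum_{\lambda<i}$ being a direct consequence of the triangular pattern of $A_M$; symmetrically the $SS\times FS_j$ block reproduces~\eqref{eqn:am-incidence-21a} with the complementary partial sum $\sum_{\lambda>j}$; and the $SOL\times FS_j$ block reproduces~\eqref{eqn:am-incidence-21b} after collapsing the $M-j$ identical terms $b^T\mathbf{1}\,b^T$ using $b^T\mathbf{1}=1$. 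Since the corresponding blocks of $\widehat{\mathbf{A}}$ are, respectively, $A_M$, the extended slow tableau, $A^{\{\f,\s,i\}}$, $A^{\{\s,\f,j\}}$ and $b^T$, the five hypotheses~\eqref{eqn:am-incidence-11}--\eqref{eqn:am-incidence-21b} are exactly the block-by-block statement $Inc(\widehat{\mathbf{A}}^2)\le Inc(\widehat{\mathbf{A}})$, and~\eqref{eqn:am-incidence-inequality} then finishes the proof.

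The only genuine work here is careful bookkeeping: tracking which block of $\widehat{\mathbf{A}}$ sits where, propagating the triangular $\mathbf{1}\,b^T$ pattern of $A_M$ correctly through every product (this is what generates the one-sided partial sums in~\eqref{eqn:am-incidence-12} and~\eqref{eqn:am-incidence-21a}), and reading ``incidence of a sum'' as the incidence of the structural union of the summands, so that the stated conditions do not implicitly rely on numerical cancellations. I would also remark that $Inc(A_M^2)\le Inc(A_M)$ holds automatically, being inherited from the base-scheme incidence condition~\eqref{eqn:incidence-base-scheme} since $A_M$ represents $M$ concatenated steps of $(A,b)$; hence in~\eqref{eqn:am-incidence-11} only the coupling term $(A^{\{\f,\s,i\}}A^{\{\s,\f,j\}})_{i,j=1,\dots,M}$ imposes a real additional restriction. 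No individual step is hard; the main pitfall is an index slip in one of the five block products, so I would cross-check each against the smallest nontrivial case $M=2$.
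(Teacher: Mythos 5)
Your proposal is correct and follows essentially the same route as the paper: the paper's proof likewise computes $\widehat{\mathbf{A}}^2$ block by block from \eqref{eqn:mr-Ahat-extended} (obtaining exactly the four block expressions you describe, with the one-sided partial sums coming from the triangular $\mathbf{1}\,b^T$ pattern of $A_M$ and the $(M-j)\,b^T$ term from $b^T\mathbf{1}=1$) and then reads off \eqref{eqn:am-incidence-conditions} as the blockwise form of $Inc(\widehat{\mathbf{A}}^2)\le Inc(\widehat{\mathbf{A}})$. Your added remark that $Inc(A_M^2)\le Inc(A_M)$ is inherited from \eqref{eqn:incidence-base-scheme} is also stated in the surrounding text of the paper.
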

\begin{proof}
The square of matrix \eqref{eqn:mr-Ahat-extended} is
\[
\renewcommand{\arraystretch}{1.5}
\widehat{\mathbf{A}}^2 = \begin{bmatrix} \widehat{\mathbf{A}}^2_{1,1} & \widehat{\mathbf{A}}^2_{1,2} \\ \widehat{\mathbf{A}}^2_{2,1} & \widehat{\mathbf{A}}^2_{2,2} \end{bmatrix}\,,
\renewcommand{\arraystretch}{1.0}
\]
with the following blocks:
\begin{eqnarray*}
\renewcommand{\arraystretch}{1.5}
\widehat{\mathbf{A}}^2_{1,1} &=&  A_M^2 + \left(  A^{\{\f,\s,i\}} A^{\{\s,\f,j\}}  \right)_{i,j=1,\dots,M}\,, \\
\widehat{\mathbf{A}}^2_{2,2} &=& \begin{bmatrix} A & 0 \\ b^T & 0 \end{bmatrix}^2 + 
\begin{bmatrix}  \sum_{\lambda=1}^M  A^{\{\s,\f,\lambda\}} A^{\{\f,\s,\lambda\}} & 0 \\  \sum_{\lambda=1}^M  b^T A^{\{\f,\s,\lambda\}}  & 0 \end{bmatrix}\,, \\
\widehat{\mathbf{A}}^2_{1,2} &=& \left(  \begin{bmatrix}   \sum_{\lambda=1}^{i-1} \one  b^T A^{\{\f,\s,\lambda\}}  +  
A \, A^{\{\f,\s,i\}} +  A^{\{\f,\s,i\}} \, A ,& 0 \end{bmatrix} \right)_{i=1,\dots,M}\,, \\
\widehat{\mathbf{A}}^2_{2,1} &=& \left(  \begin{bmatrix} \sum_{\lambda=j+1}^{M} A^{\{\s,\f,\lambda\}} \one  b^T  +  A^{\{\s,\f,j\}}\, A + A\, A^{\{\s,\f,j\}} \\
(M-j)  b^T  +  b^T\, A + b^T\, A^{\{\s,\f,j\}}
 \end{bmatrix} \right)_{j=1,\dots,M}\,.
\renewcommand{\arraystretch}{1.0}
\end{eqnarray*}
Writing the incidence inequality \eqref{eqn:am-incidence-inequality} by blocks 
yields~\eqref{eqn:am-incidence-conditions}.
\end{proof}
\begin{remark}
\begin{remunerate}
\item A comparision of~\eqref{eqn:am-incidence-22}
with \eqref{eqn:incidence-base-scheme} reveals that the monotonicity of the base scheme is a necessary, but not sufficient condition for the
absolute monotonicity of the multirate version.
The coupling coefficients have to be chosen appropriately in order to preserve this property.
For example, since $A_M$ and $A_M^2$ are block lower triangular, a necessary condition for  \eqref{eqn:am-incidence-11} is that
$ A^{\{\f,\s,i\}} A^{\{\s,\f,j\}} = \mathbf{0}$ for $i > j$.
\item If all weights of the base method are nonzero then condition \eqref{eqn:am-incidence-21b} is automatically satisfied.
\item An interesting choice of coupling coefficients is to use only the first micro-step solution in the slow calculation
\[
A^{\{\s,\f,\lambda\}} = {0}\,, \quad \lambda = 2, \dots, M\,,
\]
and to include the slow term contribution only in the last micro-step
\[
A^{\{\f,\s,\lambda\}} = 0\,, ~~ \lambda = 1,\dots,M-1\,.
\]
In this case the incidence conditions \eqref{eqn:am-incidence-11}--\eqref{eqn:am-incidence-21a} take the much simpler form:
\begin{subequations}
\label{eqn:am-incidence-conditions-simple}
\begin{eqnarray}
\label{eqn:am-incidence-11-simple}
Inc\left( A^2 + A^{\{\f,\s,M\}} A^{\{\s,\f,1\}}  \right) &\le& Inc\left( A \right)\, ,    \\
\label{eqn:am-incidence-22-simple}
Inc\left( b^T A + b^T A^{\{\f,\s,M\}}   \right) &\le& Inc\left( b^T \right)\, , \\
\label{eqn:am-incidence-12-simple}
Inc\left(\ A \, A^{\{\f,\s,M\}} +  A^{\{\f,\s,M\}} \, A \right) &\le& Inc\left( A^{\{\f,\s,M\}} \right) \, ,  \\
\label{eqn:am-incidence-21a-simple}
Inc\left( A^{\{\s,\f,1\}}\, A + A\, A^{\{\s,\f,1\}}\right) &\le&  Inc\left(A^{\{\s,\f,1\}}\right)\,. 
\end{eqnarray}
\end{subequations}
Condition \eqref{eqn:am-incidence-22-simple} is automatically satisfied if $b > 0$. Moreover, if the couplings are multiples of
the base scheme, $A^{\{\s,\f,1\}} = c_1\, A$ and $A^{\{\f,\s,M\}} = c_2\, A$, then \eqref{eqn:incidence-base-scheme} implies that all conditions \eqref{eqn:am-incidence-conditions-simple}
are satisfied.
\end{remunerate}
\end{remark}

\begin{example}[Monotonicity of an explicit multirate GARK scheme of order two]
The base for both the fast and the slow schemes is
the following explicit, order two, strong stability preserving method, with an absolute stability radius $\mathcal{R}=1$
\begin{equation}
\label{eqn:ssp2}
\renewcommand{\arraystretch}{1.25}
A =  \begin{bmatrix}  0 & 0 \\ 1 & 0 \end{bmatrix} \,, \quad 
b   =  \begin{bmatrix}  \frac{1}{2}\\ \frac{1}{2} \end{bmatrix} \,, \quad
c  =   \begin{bmatrix} 0\\ 1 \end{bmatrix}\,.
\end{equation}
 The general coupling conditions for a second order multirate scheme read:
\begin{eqnarray*}
\frac{M}{2} &=  & M\, b^{\{\s\}}\,^T A^{s,a} \one  = \sum_{\lambda=1}^M b^T \, A^{\{\s,\f,\lambda\}}\, \one\,, \\
 \frac{1}{2} & =  & b^{\{\f\}}\,^T A^{\{\f,\s\}} \one  = \frac{1}{M} \sum_{\lambda=1}^M b^T \, A^{\{\f,\s,\lambda\}}\, \one\,.
\end{eqnarray*}
We consider three different couplings.
\begin{itemize}
\item The coupling coefficients that respect the nonlinear stability decoupling condition are:
\[
A^{\{\s,\f\}} = \begin{bmatrix} 0 & 0 \\ M & 0  \end{bmatrix},
\quad
D = \begin{bmatrix} 0 & M \\ 0 & 0  \end{bmatrix} = A^{\{\s,\f\}}\,^T,
\]
\[
{\mathbf{A}}^{\{\s,\f\}}  = 
\begin{bmatrix}
\frac{1}{M} 
A^{\{\s,\f\}} & 0 & \cdots & 0  
\end{bmatrix}
=
\begin{bmatrix}
A^{\{\s,\s\}} & 0 & \cdots & 0  
\end{bmatrix}.
\]
\begin{eqnarray*}
\renewcommand{\arraystretch}{1.5}
\mathbf{A^{\{\f,\s\}}} := 
\begin{bmatrix}
\one {b}^{\{\s\}}\,^T - B^{\{\f\}}\,^{-1} A^{\{\s,\f\}}\,^T B^{\{\s\}}  \\
\one {b}^{\{\s\}}\,^T \\
\vdots \\
\one {b}^{\{\s\}}\,^T
\end{bmatrix}
=
\begin{bmatrix}
\frac{1}{2} &  \frac{1}{2}-M \\
\frac{1}{2} &  \frac{1}{2} \\
\vdots  & \vdots \\
\frac{1}{2} &  \frac{1}{2}
\end{bmatrix}
\renewcommand{\arraystretch}{1.0}
\end{eqnarray*}
Because of the negative coefficient this method is not absolutely stable.

\item Coupling the slow step with only the first fast step is achieved by
\begin{equation}
\label{eqn:slow-with-first-fast}
A^{\{\s,\f,1\}} = \begin{bmatrix} 0 & 0 \\ M & 0  \end{bmatrix} \,, \quad
A^{\{\s,\f,\lambda\}} = \mathbf{0}\,, ~~~ \lambda \ge 2\,.
\end{equation}
The second order condition can be fulfilled by taking
\[
A^{\{\f,\s,\lambda\}} = A\,, \quad \forall \, \lambda\,.
\]
%
%
For $M \ge 2$ we have $\mathcal{R}=0$, since \eqref{eqn:mr-Ahat} corresponds to an irreducible Runge Kutta scheme, and \eqref{eqn:am-incidence-12}
is not satisfied.

\item We now build a scheme with \eqref{eqn:slow-with-first-fast}, and which includes the slow terms only in the last micro-step
\[
A^{\{\f,\s,\lambda\}} = 0\,, ~~ \lambda = 1,\dots,M-1\,, \quad
A^{\{\f,\s,M\}} = M\, A\,.
\]
With this coupling the multirate scheme maintains the absolute stability radius of the base method for any $M$, as all conditions \eqref{eqn:am-incidence-conditions-simple} are satisfied.
\end{itemize}
\end{example}

We note that monotonicity conditions for several multirate and partitioned explicit Runge-Kutta schemes are  discussed by Hundsdorfer, Mozartova, and Savcenco in a recent report \cite{Hundsdorfer_2013_mr-monotonicity}.


\section{Conclusions and future work}\label{sec:conclusions}

This work develops multirate generalized additive Runge Kutta schemes, which exploit the different levels of activity within the partitions of the right-hand sides and/or components by using 
appropriate time steps for each of them. Multirate GARK schemes inherit the high level of flexibility from GARK schemes~\cite{SaGu13a}, which allow for different stage values 
as arguments of different components of the right hand side. Many well-known multirate Runge-Kutta schemes, such as the Kvaerno-Rentrop methods~\cite{Kvaerno_1999_MR-RK}, the multirate infinitesimal step methods~\cite{Knoth_1998_MRimex}, and methods based on dense output coupling, are particular particular members of the new family of methods. 

The paper develops the order conditions (up to order three) for the generalized additive multirate schemes. 
We extend the GARK algebraic stability and monotonicity analysis~\cite{SaGu13a} to the new multirate family, and show that these properties are inherited from the base schemes provided that some coupling conditions hold. 

Future work will construct multirate GARK methods tailored to special 
applications in, for example, circuit design, vehicle system dynamics, or air quality modeling, and will extend the
new family of schemes to differential-algebraic equations.

\section*{Acknowledgements}
The work of A. Sandu has been supported in part by NSF through awards NSF
OCI--8670904397, NSF CCF--0916493, NSF DMS--0915047, NSF CMMI--1130667, 
NSF CCF--1218454, AFOSR FA9550--12--1--0293--DEF, AFOSR 12-2640-06,
and by the Computational Science Laboratory at Virginia Tech.

The work of M. G\"unther has been supported in part by BMBF through grant 
03MS648E.
\bibliographystyle{siam}

\end{document}